\newtheorem{assumption}[theorem]{Assumption}
\newcommand{\R}{\mathbb R}
\newcommand{\Z}{\mathbb Z}
\newcommand{\C}{\mathbb C}
\newcommand{\nats}{\mathbb N}
\def\bI{\boldsymbol{I}}
\def\bR{\boldsymbol{R}}
\def\bA{\boldsymbol{A}}
\def\bS{\boldsymbol{S}}
\def\bC{\boldsymbol{C}}
\def\bU{\boldsymbol{U}}
\def\bv{\boldsymbol{v}}
\def\bx{\boldsymbol{x}}
\def\by{\boldsymbol{y}}
\def\bz{\boldsymbol{z}}
\def\bu{\boldsymbol{u}}
\def\bb{\boldsymbol{b}}
\def\cR{\mathcal{R}}
\def\cS{\mathcal{S}}
\def\cR{\mathcal{R}}
\def\cD{\mathcal{D}}
\def\cX{\mathcal{X}}
\def\cY{\mathcal{Y}}
\def\cT{\mathcal{T}}
\def\cH{\mathcal{H}}
\def\cZ{\mathcal{Z}}
\def\bzero{\boldsymbol{0}}
\def\bxi{\boldsymbol{\xi}}
\def\bDelta{\boldsymbol{\Delta_s}}
\def\argmin{\text{argmin}}
\def\argmax{\text{argmax}}
\def\ave{\text{ave}}
\DeclareMathOperator{\Tr}{\text{Tr}}
\DeclareMathOperator{\SO}{\text{SO}}
\DeclareMathOperator{\depth}{\mathsf{depth}}
\DeclareMathOperator{\Exp}{\text{Exp}}
\DeclareMathOperator{\Log}{\text{Log}}
\newcommand{\bset}[1]{\Big\{#1\Big\}}
\begin{document}

\title{Depth Descent Synchronization in $\SO(D)$}

\author{Tyler Maunu         \and
	Gilad Lerman
}


\institute{Tyler Maunu \at
	Department of Mathematics, Massachusetts Institute of Technology\\
	\email{maunut@mit.edu}           
	\and
	Gilad Lerman \at
	School of Mathematics, University of Minnesota \\
	\email{lerman@umn.edu}
}

\date{}




\maketitle


 \begin{abstract}
	 We give robust recovery results for synchronization on the rotation group, $\SO(D)$. In particular, we consider an adversarial corruption setting, where a limited percentage of the observations are arbitrarily corrupted. We develop a novel algorithm that exploits Tukey depth in the tangent space of $\SO(D)$. This algorithm, called Depth Descent Synchronization, exactly recovers the underlying rotations up to an outlier percentage of $1/(D(D-1)+2)$, which corresponds to $1/4$ for $\SO(2)$ and $1/8$ for $\SO(3)$. In the case of $\SO(2)$, we demonstrate that a variant of this algorithm converges linearly to the ground truth rotations. { We implement this algorithm for the case of $\SO(3)$ and demonstrate that it performs competitively on baseline synthetic data.}
	 \keywords{Robust synchronization \and Structure from motion \and Nonconvex optimization \and Multiple rotation averaging}
 \end{abstract}

\section{Introduction}

The typical synchronization problem involves recovery of $n$ group elements from pairwise measurements between them. It arises, for example, when solving the Structure from Motion (SfM) problem. One subproblem of SfM is to recover the three-dimensional orientations and positions of cameras from pairwise orientations and positions in relation to a scene~\citep{ozyecsil2017survey}. Here, we specifically focus on robust synchronization over $\SO(D)$, the rotation group for $\R^D$. That is, given pairwise rotations in $\SO(D)$, some of which are corrupted, we aim to recover the original set of $n$ rotations.

We assume $n$ unknown, ground truth elements of $\SO(D)$, which we denote by $\bR_1^\star, \dots, \bR_n^\star$. We form a graph $G([n],E)$, where  $[n]:=\{1,\ldots,n\}$ indexes the $n$ unknown elements and $E$ designates the edges for which measurements of relative rotations are taken. For each $jk \in E$, we are provided with the measurement
\begin{equation}\label{eq:meas}
	\bR_{jk}^\star = \bR_j^\star \bR_k^{\star \top}.
\end{equation}
We can think of $\bR_{jk}^\star$ in the following way: If we are oriented in the coordinate system with respect to node $k$, then $\bR_{jk}^\star$ rotates our coordinate system into the coordinate system we would see if we were sitting at node $j$. 
This synchronization formulation extends to any given group, where one wishes to recover $(g_1, \dots, g_n)$, an $n$-tuple of elements in the group, given  measurements of the group ratios $g_i g_j^{-1},\ i,j=1,\dots,n$.

In reality, we cannot hope to exactly measure all the pairwise rotations in~\eqref{eq:meas}. In many real systems, both noisy and corrupted measurements occur: our focus here is on \emph{adversarially corrupted measurements}. That is, within the measurement graph $G$, the corruption model is assumed to be fully adversarial. Our model is specified by partitioning the measured data into two parts: 
\begin{enumerate}
	\item We observe corrupted (or ``bad'') edges $E_b \subset E$, where all edges in $E_b$ have a corresponding arbitrary corruption. The adversary is allowed to choose $E_b$ (and thus may to some degree influence the connectivity of $E \setminus E_b$) as well as the corrupted values $\bR_{jk}$ for $jk \in E_b$. For each node, the adversary is only allowed to corrupt a limited fraction of edges.
	\item The rest of the observed edges are uncorrupted (or ``good'') edges $E_g = E \setminus E_b$, where each edge in $E_g$ has an associated measurement given by~\eqref{eq:meas}.
\end{enumerate}

Theoretically guaranteed methods for robust synchronization are still lacking, especially in adversarial and nonconvex settings. The development of these methods is important because in practice measurements are usually quite corrupted, especially in applied problems like Structure from Motion~\citep{ozyecsil2017survey}.
The results we establish here are concerned with \emph{exact recovery}. That is, given a set of corrupted measurements, we wish to exactly recover $\bR_1^\star, \dots, \bR_n^\star$. We will show that this is possible for a nonconvex method even in the presence of a significant amount of arbitrary corruption.

Our method falls into the class of \emph{multiple rotation averaging algorithms}~\citep{govindu2004lie,martinec2007robust,hartley2013rotation}. These methods are effectively coordinate descent algorithms, which are present highly efficient algorithms for nonconvex programs that are notoriously hard to analyze. While their analysis is challenging, it is imperative to develop a theoretical understanding of these methods and their robust counterparts~\citep{hartley2011l1,chatterjee2017robust}. Moreover, as we discuss later, there are few robustness guarantees for group synchronization with adversarial corruption. Among the limited guarantees, none cover our model, and we thus make a significant contribution to this area.
This work is also of general appeal to the nonconvex optimization community since we are able to prove convergence results in the complex nonconvex landscape of robust multiple rotation averaging. Furthermore, some energy landscapes associated with this problem exhibit many local minima and spurious fixed points, which we are able to avoid with our new method.

\subsection{Contributions of This Work}

The main contributions of this work follow. 
\begin{enumerate}
    \item {As a warm-up, we develop an adversarially robust algorithm for synchronization in $\SO(2)$, which we call Trimmed Averaging Synchronization (TAS). In Theorem~\ref{thm:tasrecovery}, under a generic condition on the measurement graph $G$, which we call the ``well-connectedness'', and proper initialization, we show that it can tolerate a fraction of outliers per node that is bounded above by $1/4$. We further prove that it converges linearly for fully connected observation graphs.}
	\item To extend this result to $\SO(D)$, we develop a new algorithm that we call \emph{Depth Descent Synchronization (DDS)} based on Tukey depth in the tangent space of $\SO(D)$. {To our knowledge, this is the first application of a manifold version of Tukey depth in an applied setting.}
	\item Assuming well-connectedness and good initialization, the DDS algorithm exactly recovers an underlying signal in the presence of a significant amount of adversarial outliers. This result is given in Theorem~\ref{thm:sodrecovery} and is the first guarantee of robustness to adversarial corruption for a multiple rotation averaging algorithm. {This result extends elegantly to sparse random graphs, where we show that it achieves the information theoretic rate with respect to graph sparsity for Erd\"os-R\'enyi observation graphs in Section~\ref{subsec:assumpdiscuss}.}
	\item { We show that this algorithm can be efficiently implemented for $\SO(3)$. We run baseline experiments that show it performs competitively on some baseline synthetic data for $\SO(3)$ synchronization, which arises in the important application of Structure from Motion.}
\end{enumerate}

While we carefully review related work later in Section \ref{sec:related}, we emphasize here our contributions in terms of the most relevant works. { Again, we emphasize that we study an efficient, nonconvex algorithm for rotation synchronization that has guarantees for adversarial outliers.}

A robustness result for $\SO(D)$ synchronization based on semidefinite programming is given in~\citet{wang2013exact}. However, the probabilistic model in this work is very restrictive (see details in Section \ref{sec:related}), and the proposed method is slow for large $n$. 

{
\cite{huang2019learning} use a truncated least squares framework to do robust rotation synchronization. The truncated least squares framework was originally proposed by~\cite{truncatedLS} in the context of translation synchronization, and sequentially filters those pairwise measurements that are furthest from the current estimated pairwise measurements. Following this, \cite{huang2019learning} show that this can be extended to rotation synchronization. Under an appropriate choice of a thresholding parameter, they demonstrate that it is possible to exactly recover the ground truth in the presence of outliers if a certain generic condition is satisfied. This method has two downsides. First, one must repeatedly compute the lowest eigenvectors of the graph connection Laplacian, which has a higher memory cost for dense graphs than DDS and may also have issues of numerical stability. Second, the bound on the fraction of outliers that they present is not clear in general settings since it depends on the $(1,\infty)$-norm of the pseudoinverse of the graph connection Laplacian. This quantity is hard to control, and so it is unclear how their bound scales with various parameters as well as what it would state for arbitrary outliers. On the other hand,~\cite{truncatedLS} guarantees success of the truncated least squares method for adversarial outliers when considering translation synchronization in 1-dimension, but we do not see how to extend these to the problem of rotation synchronization.

}

The only existing result for adversarial robust synchronization was recently given by \citet{lerman2019robust}. 
They propose a general method, called Cycle-Edge Message Passing (CEMP), for group synchronization that is guaranteed to be robust to adversarial corruption. However, their method uses information from 3-cycles, that is, triangles in the graph, and so it is less efficient than typical multiple rotation averaging schemes by an order of $n$ (the ratio between the number of triangles and the number of edges in the graph). Beyond this, multiple rotation averaging algorithms are also attractive because they are more memory efficient. A caveat to our current work is that our new method is not as efficient as previous multiple rotation averaging algorithms. In particular, we require the computation of a depth-based estimator, and so each rotation update has complexity $O(n_j^3\log(n_j))$ for $\SO(3)$, where $n_j$ is the number of neighbors of the node to be updated. Therefore, we do not claim that DDS is uniformly most efficient for adversarially robust synchronization in terms of time complexity, although it is more computationally efficient than CEMP for very sparse graphs. However, our depth descent method does still have the benefit of more efficient memory complexity than~\cite{lerman2019robust}.

Beyond computational efficiency, the theoretical guarantees are also different: we bound the ratio of corrupted edges, whereas \citet{lerman2019robust} bound the ratio of corrupted triangles. {The method of~\cite{lerman2019robust} degrades with extremely sparse graphs, due to the fact that they need to ensure that it contains sufficiently many triangles, whereas we require a well-connectedness condition of the graph $G$ that extends to extremely sparse cases.} Other well-connectedness conditions appear, sometimes implicitly, in works minimizing energy functions \citep{wang_singer_2013, hand2018exact, lerman2018exact, truncatedLS}. {Finally, CEMP is tailored to finding the corruption level in the graph, but it does not have complete guarantees for the recovery of the underlying rotations themselves.}


\subsection{Notation}

Bold uppercase letters will be used to denote matrices, while bold lowercase letters will be used to denote vectors. 
For a set $\cX$ in a Hilbert space, the convex hull is denoted by $\mathrm{conv}(\cX)$.
The sphere in $\mathbb{R}^D$ is written as $S^{D-1}$. For an ordered tuple of $n$ rotations, $\bR_1, \dots, \bR_n \in \SO(D)$, we write $(\bR) = (\bR_1, \dots, \bR_n)$.

\subsection{Structure of the Rest of the Paper}

{
We now outline the structure of this paper.
First, we review related work in Section~\ref{sec:related}.  We then discuss the specific case of synchronization over $\SO(2)$ in Section \ref{sec:tas} and give a simple, adversarially robust algorithm, called Trimmed Averaging Synchronization. Following this, in Section \ref{sec:dds} we develop our novel Depth Descent Synchronization algorithm, which utilizes Tukey depth to yield robust updates. Coupled with this, we develop its theoretical guarantees of robustness and convergence.
Section~\ref{sec:exp} presents some baseline experiments demonstrating the practicality of our proposed method.
}

\section{Related Work}
\label{sec:related}

Interest in the synchronization problem has grown in recent years due to applications in computer vision and image processing, such as SfM~\citep{govindu2004lie,martinec2007robust,arie2012global,hartley2013rotation,tron2009distributed,ozyesil2015stable,boumal2016nonconvex}, cryo-electron microscopy~\citep{wang2013exact} and
Simultaneous Localization And Mapping
(SLAM)~\citep{rosen2019se}.

The most common formulation for solving
rotation and other group synchronization problems involve a non-convex least squares formulation that can be addressed by spectral methods~\citep{singer2011angular} or semidefinite  relaxation~\citep{bandeira2017tightness}.
On the other hand, the work of~\citet{wang2013exact} uses a semidefinite relaxation of a least absolute deviations formulation to obtain a robust estimate for SO$(d)$ synchronization. They prove recovery for the pure optimizer of this convex problem in a restricted setting. In this setting the full graph is complete,  every edge is corrupted with a certain probability $p$ (in the case of $\SO(2)$, they require that $p \leq 0.543$ and for $\SO(3)$ they require $p \leq 0.5088$) and the corrupted group ratios are distributed uniformly on $\SO(D)$.
In practice, they advocate using an alternating direction augmented Lagrangian to solve their optimization problem. 
One may also use methods like the Burer-Monteiro formulation~\citep{boumal2018deterministic}, although current guarantees require the rank of the semidefinite program to be at least $O(\sqrt{n})$, which results in storing iterates much larger than the underlying signal that is a vector of size $n$~\citep{waldspurger2018rank}. Another recent work tries to leverage a low-rank plus sparse decomposition for robust synchronization~\citep{arrigoni2018robust}. However, this work does not contain robustness guarantees.

\subsection{Robust Synchronization Methods}

For a survey of robust rotation synchronization, see~\citet{tron2016survey}. Some early works on rotation synchronization include \citet{govindu2001combining,govindu2006robustness,martinec2007robust}, with later follow-up works by~\citet{hartley2013rotation,chatterjee2013efficient,chatterjee2017robust}. The later works discuss some least absolute deviations based approaches to multiple rotation averaging which we will discuss later. 
For theoretical foundations on averaging rotations, one can consult~\cite{moakher2002means}. For foundational work on optimization on the manifold SO($d$), see~\citep{taylor1994minimization,arora2009learning}.

Robust multiple rotation averaging algorithms were studied in \citet{hartley2011l1} and \citet{hartley2013rotation}. There, the authors used a least absolute deviations formulation over SO(3) using successive averaging with a Weiszfeld algorithm and a gradient-based algorithm. The authors also give a counterexample that shows that local minima exist and thus the global minimum of their problem may be hard to find in general. However, the authors give no guarantee of convergence or recovery in any setting. 
{
Also, we have found that this method may suffer from suboptimal fixed points in general, which we analyze in more detail in a forthcoming work (see also Section 5 of~\cite{maunu2020depth}).
}


\subsection{Adversarially Robust Synchronization}

 A work that does contain guarantees is that of \citet{lerman2019robust}, which considers a message-passing procedure that incorporates consistent information from cycles. This algorithm was guaranteed to be robust for the adversarial setting and applies to any compact group. Although its adversarial setting is very general, it requires a bound on the ratio of corrupted cycles per edge and not on the ratio of corrupted edges. Furthermore, the use of cycles results in a potentially more computationally intensive algorithm than the one in this work that only uses pairwise information.

Guarantees for exact recovery with adversarial, or partially adversarial, corruption appear in few other synchronization problems. The adversarial corruption in $\Z_2$ synchronization is very special since there is a single choice to corrupt a group ratio. 
Under a special probabilistic model, \citet{Z2Afonso} established asymptotic and probabilistic exact recovery for the SDP relaxation of the least squares energy function of $\mathbb{Z}_2$ synchronization. The model assumes that $G([n],E)$ is an Erd\"os-R\'enyi graph with probability $p$ of connection, edges are randomly corrupted with probability $q$ and $p \, (1-2q)^2 \leq 0.5$. 
\citet{hand2018exact} and~\citet{lerman2018exact} established asymptotic exact recovery under a probabilistic model for solutions of the different problem of location recovery from pairwise orientations. In this problem ratios of the Euclidean group are normalized to the sphere. They assume an i.i.d.~Gaussian generative probabilistic model for the ground truth locations and an Erd\"os-R\'enyi model for the graph $G([n],E)$ and further bounded the ratio of maximal degree of $G([n],E_b)$ over $n$. In both works, these bounds approach zero as $n$ approaches infinity, unlike the constant bound of this work.

{Robust permutation synchronization was studied by~\cite{huang2013consistent}, where they give a maximum corruption percentage of 1/4 in the case of fully connected observation graphs. \citet{truncatedLS} analyzed a robust algorithm for one-dimensional translation synchronization that uses a truncated least squares formulation. They show that their method achieves a maximum corruption percentage of 1/6 for fully connected graphs. However, their generic condition is rather complicated and in order to interpret it they must make the fully connected assumption and they also restrict the maximal degree of $G([n],E_b)$.
In both~\citep{huang2013consistent,truncatedLS}, the bounds degrade for sparser observation graphs.

The results of~\cite{truncatedLS} were extended to the problem of rotation synchronization in~\cite{huang2019learning}. Here, the authors show that a truncated least squares formulation for rotation synchronization can recover the underlying signal assuming a generic bound that includes the $(1,\infty)$-norm of the pseudoinverse of the graph connection Laplacian. However, this bound is hard to interpret in general.
}

\subsection{Synchronization in Other Settings}

In contrast to corrupted settings, some works have considered estimation in a noisy setting. \citet{bandeira2017tightness} study maximum likelihood estimation of the angular synchronization problem and show that the associated semidefinite relaxation is tight.  
More recently, message-passing algorithms have been used for maximum likelihood estimation in the Gaussian setting~\citep{perry2018message}.
Other recent results leverage multiple phases to obtain better results in noisy settings~\citep{gao2019multi}.
Minimax estimation under the squared loss over $\SO(2)$ is considered in \cite{gao2020exact}, and optimization methods for the squared error over subgroups over the orthogonal group are considered in~\cite{liu2020unified}.

Another related problem over $\SO(2)$ is the synchronization of Kuramoto oscillators. In particular, a primary question is the minimal graph connectivity requirement ensuring that the energy landscape is nice. The weakest known requirement is that every vertex is connected to at least 0.7889$n$ other vertices~\citep{lu2019synchronization}. The conjectured bound is 0.75$n$, which is reminiscent of the bound we require for local recovery with adversarial corruption over $\SO(2)$.

\subsection{Nonconvex Optimization}

Optimization problems cast over $\SO(D)$ are usually nonconvex. We can think of our method as attempting to solve a nonconvex problem over $\SO(D)$ as well. Therefore, our work also fits in with the growing body of work analyzing nonconvex energy landscapes and procedures~\citep{dauphin2014identifying,hardt2014understanding,jain2014iterative,netrapalli2014non,yi2016fast,zhang2018robust,ge2015escaping,lee2016gradient,arora2015simple,mei2018landscape,ge2016matrix,boumal2016nonconvex,sun2015nonconvex,SunQuWright_nonconvex_sphere_2015,lerman2017fast,cherapanamjeri2017thresholding,ma2018implicit,maunu2019well}.

\subsection{Tukey Depth}

Finally, we appeal to tangent space depth, that is, using Tukey depth \citep{tukey1974t6} in the tangent space of the manifold $SO(D)$, to create a provable robust method. Tangent space depth, for a general manifold, first appeared in \cite{mizera2002depth}, where the author proves existence and depth bounds for maximum tangent depth estimators. Earlier work on Tukey, or halfspace, depth includes~\cite{rado1946theorem}, which proves a depth lower bound for general measures, and \cite{danzer1963helly}, which discusses the relation to Helly's Theorem. More recently, the classical reference of \cite{donoho1992breakdown} proves bounds on the maximum depth achieved in a dataset under ellipticity conditions. Computation of depth contours was considered in~\cite{liu2017fast,hammer2020estimating}. Recently, an interesting connection between depth estimators and generative adversarial networks has been exhibited~\citep{gao2018robust}, which may perhaps lead to more computationally efficient estimators.

\subsection{Notions of Robustness}
\label{subsec:robnotions}

We finish by clarifying our setting in the context of robustness. In order to quantify our notion of robustness, we introduce the following terminology. Recall that we have an underlying graph $G([n],E)$ corresponding to the pairwise measurements, where $E$ is partitioned into an inlier set, $E_g$, and outlier set $E_b$.
For any $j \in [n]$, we define its neighborhood as well as its inlier and outlier neighborhoods as
\begin{align}
	E^j = E_g^j \cup E_b^j, \quad E_g^j := \{k \in [n] : jk \in E_g\},\quad E_b^j := \{k \in [n] : jk \in E_b\}.
\end{align}
We will denote by $\alpha_0$ the maximum percentage of outliers per node. 
That is, $\alpha_0$ is the maximum of  $\#(E_b^j)/n_j$ over all $j \in [n]$,
where throughout the rest of the paper $\#(\cdot)$ denotes the number of points in a set and $n_j$ is the degree of node $j$, $n_j = \#(E^j)$. 

The following notion of recovery threshold is related to the notion of a breakdown point in robust statistics. However, our goal is somewhat different, since we desire an exact estimator rather than an approximation, as is typically considered for classical breakdown points. This is similar to the notion of RSR breakdown point given in Section 1.1 of~\citet{maunu2019robust}.
\begin{definition}[Recovery Threshold]
	The recovery threshold of a robust rotation synchronization algorithm is the largest value of $\alpha_0$ such that the algorithm outputs an estimator $(\hat{\bR})$ that satisfies~\eqref{eq:reccond}.
\end{definition}

The simplest information-theoretic bound for the recovery threshold is $\alpha_0\leq 1/2$. Indeed, if $\alpha_0 > 1/2$, then an adversary could easily choose $E_b$ to have a subgraph that dominates $E_g$ with a consistent set of measurements for an alternative signal $(\bR^b) = (\bR_1^b, \dots, \bR_n^b)$. That is, the observations would be
\begin{equation}
    \bR_{jk} = \begin{cases}
    \bR_j^{\star \top} \bR_k^\star, & jk \in E_g \\
    \bR_j^{b \top} \bR_k^b, & jk \in E_b. \\
    \end{cases}
\end{equation}
If the adversary chooses the partition $E_g$ and $E_b$ properly, then one could easily think that $\bR^b$ is the true underlying signal. For our method, we obtain recovery thresholds for $\alpha_0$ that are smaller than $1/2$.

{
    On the other hand, the information theoretic bound may be much higher in special models. For example, suppose that $G([n],E)$ is an Erd\"os-R\'enyi graph with parameter $p$. Suppose further that each edge in $E$ is corrupted independently with probability $q$, and the corrupted measurements are i.i.d.~uniform on $\SO(D)$. \citet{wang2013exact} call this the uniform corruption model. Then,~\citet{singer2011angular} and~\citet{chen2016information} established the following information theoretic threshold for the rotation synchronization problem:
    \begin{equation}\label{eq:inforate}
        q= 1-\Omega\left(\sqrt{\frac{1}{p} \frac{\log(n)}{n}}\right).
    \end{equation}
    Notice that one needs $p \gtrsim \log(n)/n$ to ensure that the underlying Erd\"os-R\'enyi graph is connected. For fixed $p$, notice that one can take $q$ arbitrarily close to 1 (and so $\alpha_0$ is then very close to 1) as long as $n$ is sufficiently large. We later discuss how our main results extend to this model in Section~\ref{subsec:assumpdiscuss}, where we show that the DDS algorithm achieves optimal recovery rates with respect to $p$ (i.e., it can tolerate extremely sparse observation graphs).
}

\section{{An Adversarially Robust Algorithm for $\SO(2)$ Synchronization}}
\label{sec:tas}

To begin to build motivation for our method, we consider the case of synchronization over $\SO(2)$, where the method becomes considerably simpler due to its 1-dimensional manifold structure. First, Section~\ref{subsec:geomc1} gives definitions of some geometrical objects on $\SO(2)$, which we identify with $\C_1$ for mathematical convenience. 
Then, in Section~\ref{subsec:tas}, we define our $\SO(2)$ synchronization method, which we call Trimmed Averaging Synchronization (TAS) and is a special case of our later DDS algorithm. Finally, Section~\ref{subsec:tastheory} discusses the initialization and well-connectedness assumptions and uses these to give an adversarial recovery guarantee for the TAS algorithm.

\subsection{The Geometry of $\C_1$}
\label{subsec:geomc1}

We define a few structures related to the manifold $\C_1$.
The tangent space can be identified with $\R$. 
Let $v \in T_z \C_1$ be a unit direction in the tangent space at $z_j$ (i.e., $v = \pm 1$). 
The geodesic originating at $z_j$ in the direction $v$ is given by $\gamma(t) = e^{i v t} z_j, \ t \in [0, \pi/|v|]$.
The exponential map and inverse exponential map (logarithm map) on this 1-dimensional manifold are given by
\begin{equation}
    \Exp_z(\theta) = e^{i \theta} z, \theta \in (-\pi, \pi],\quad \Log_z(y) = \arg(y \overline{z}).
\end{equation}
Finally, the \emph{cut-locus} of a point $z \in \C_1$ is defined as the set of points for which there is not a unique geodesic from $z$. It is not hard to see that this is given by $ \mathsf{cut}(z) = \{-z\}$.


Recall that we seek an underlying signal $\bz^\star \in \C_1^n$. Notice that its elements, $z_j^\star \in \C_1$ for $j \in [n]$, can be parameterized by angles, $z_j^\star = e^{i \theta_j^\star}$. This angle is also known as the argument of the complex number, and so we write $\arg(e^{i\theta}) = \theta$, where $\theta \in (-\pi, \pi]$. The angular, or geodesic, distance between $z_1$ and $z_2 \in \C_1$ is
\begin{equation}
\label{eq:ang_dist}
	d_\angle(z_1, z_2) = |\arg(z_1 \overline{z_2})|.
\end{equation}
For later reference, we plot the extended angular distance function in Figure~\ref{fig:energy}.

\begin{figure}[ht]
	\centering
	\includegraphics[width=0.6\textwidth]{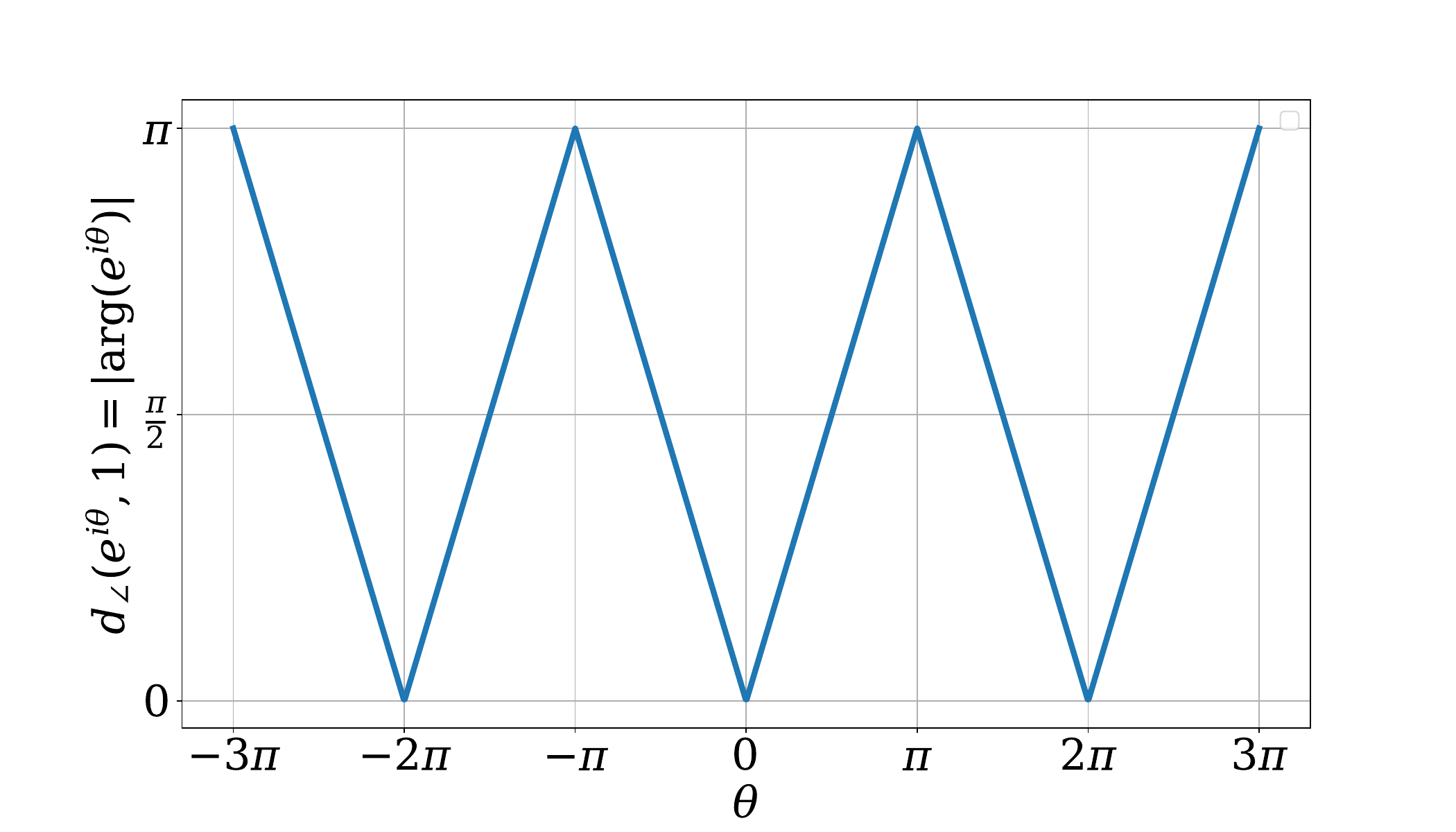}
	\caption{The angular distance function $d_\angle(e^{i \theta}, 1)$.}\label{fig:energy}
\end{figure}

Recall that if $jk \in E_g$, then the edge measurement is correct, that is, $z_{jk} = z_{jk}^\star$, where $z_{jk}^\star:=z_j^\star \overline{z_k^\star}$ is defined analogously to~\eqref{eq:meas}. 
For $jk \in E_b$, the measurement $z_{jk}$ is assumed to be an arbitrary element of $\C_1$.
{From the measurements $z_{jk}^\star$, $jk \in E_g$, $\bz^\star$ is only identified up to a global rotation, due to the ambiguity that $z_j^\star \overline{z_k^\star} = z_j^\star y \overline{y z_k^\star}$, $y \in \C_1$, and so $\bz^\star y$ generates the same pairwise measurements as $\bz^\star$.}

To deal with this ambiguity, the following function will be used to demonstrate convergence of a sequence to $\bz^\star$:
\begin{equation}
    \delta(\bz) = \max_{jk \in E} d_\angle(\overline{z_j^\star} z_j , \overline{z_k^\star}z_k ).
\end{equation}
This is again nothing but a function that measures the maximum distance between normalization products.
Notice that $\delta(\bz) = 0 \iff \bz = \bz^\star y$ for some rotation $y \in \C_1$. Therefore, convergence of $\delta(\bz)$ to zero indicates convergence of $\bz$ to $\bz^\star$, and an algorithm exactly recovers $\bz^\star$ iff $\delta(\bz) \to 0$.


\subsection{Trimmed Averaging Synchronization}
\label{subsec:tas}

{
A natural way to solve the rotation synchronization problem involves energy minimization. The simplest strategy~\citep{govindu2001combining,martinec2007robust} attempts to minimize
\begin{equation}\label{eq:ls}
	\min_{\bz \in \C_1^n} F_{\angle}(\bz) := \sum_{jk \in E} d_\angle^2\left(z_j, z_{jk} z_k\right).
\end{equation}
A coordinate descent strategy to solve~\eqref{eq:ls} involves updating $z_j$ by solving
\begin{equation}\label{eq:mraupdate}
	\min_{z\in \C_1} \sum_{k \in E^j} d_\angle^2\left(z, z_{jk} z_k\right).
\end{equation}
Applying this sequentially over the indices $j=1,\dots,n$ results in the multiple rotation averaging (MRA) discussed in more detail in~\cite{hartley2013rotation}.
While such coordinate descent strategies generally lack coordination across all objects like global synchronization methods, they lead to algorithms that are more memory efficient and that can be decentralized easily.

}

Another simple way to robustify~\eqref{eq:mraupdate} is to select the average of all points that fall within a trimmed set, which results in a trimmed averaging procedure. To account for the 1-dimensional manifold structure of $\SO(2)$, we propose to do this trimming in the tangent space, which yields the TAS algorithm. An illustration of one trimmed averaging step is given in Figure~\ref{fig:tas}.

For a discrete $\mathcal{X} \subset \R$ and a fraction $0<p<1$, we write the $p$th quantile of $\mathcal{X}$ by $\mathcal{X}_{p}$.
It is convenient to define the trimming operator
\begin{equation}
	\cT_{\tau}\cX  = \bset{x \in \cX : \cX_{\tau} \leq x \leq \cX_{1-\tau}}.
\end{equation}
We also denote the average of a dataset $\cX \subset \R$ by 
$\ave(\cX)$. 
That is, $\ave(\cX) = \big(\sum_{x \in \cX} x\big)/\#(\cX)$.

Due to the simplified geometry of $\SO(2)$, we will show in the following that using this trimmed rotation averaging scheme converges to the underlying solution linearly when the percentage of outliers is at most $\alpha_0 < 1/4$ when $G$ is fully connected. In the case where $(G,[n])$ is not fully connected, the result is a corollary of our later Theorem~\ref{thm:sodrecovery} under a connectedness assumption on $(G,[n])$. 
We note that this fraction is similar to the one given in~\citet{lerman2019robust}, although there the bound is formulated for corrupted triangles in the graph. 



\begin{figure}[ht]
	\centering
	\includegraphics[width=0.7\columnwidth,trim=0 150 0 110,clip]{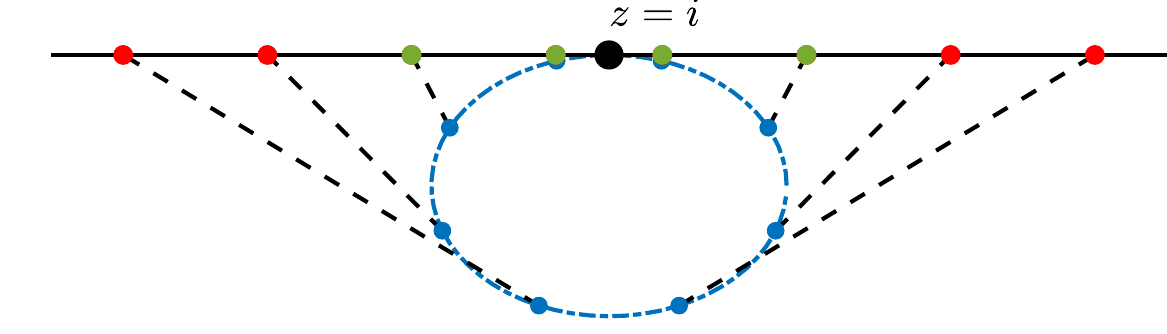}
	\caption{Illustration of the TAS algorithm at a fixed step and a fixed node $j$. The measurement is $z_j = z = i$. After projecting into the tangent space, the outermost points in red are filtered, and the green points are averaged. This trimmed average is then projected back to the manifold.}\label{fig:tas}
\end{figure}
 
For clarity, we give the TAS algorithm in Algorithm~\ref{alg:tas}. To allow for damping of the updates, we include the step-size parameter $\eta \in (0,1]$. When $\eta<1$, we refer to the algorithm as Damped TAS or DTAS for short.


\begin{algorithm}[ht]
    \caption{ $\eta$-Damped Trimmed Averaging Synchronization}\label{alg:tas}
    \begin{algorithmic}
    \REQUIRE{$\bz(0)$, number of iterations $T$, damping parameter $\eta \in (0,1]$, trimming parameter $\tau$}
    \FOR{$t = 1, \ldots, T$}
        \STATE{$j = t \mod n$}
         \STATE{$z_j(t+1) \gets \Exp_{z_j(t+1)} \left[ \eta \cdot \ave \left( \cT_{\tau} \left\{  \Log_{z_j(t+1)} \left(z_{jk} z_k(t) \right) : k \in E^j \right\} \right) \right]$}
         \STATE{$z_k(t+1) \gets z_k(t), \ k \neq j$}
     \ENDFOR
    \RETURN{$\bz(T)$}
    \end{algorithmic}
\end{algorithm}


\subsection{Recovery Guarantees for DTAS}
\label{subsec:tastheory}

{ We begin by discussing the assumptions that will make a synchronization problem tractable for TAS.
The first assumption we require is a good initialization, which is common in the analysis of such nonconvex methods.
\begin{assumption}\label{assump:initso2}
	The initial set of rotations $\bz(0) \in \C_1^n$ lies within a $\pi/2$-neighborhood of $\bz^\star$: that is, there exists a $w \in \C_1$ such that 
	\begin{equation}
	    d_\angle(\overline{z_j^\star} z_j, w) < \pi/2, \ j=1, \dots, n.
	\end{equation}
	Note that this is equivalent to the assumption that $\delta(\bz) < \pi$.
\end{assumption}
}

While corruptions are arbitrary, we require an assumption on the underlying graph $(G,[n])$. It essentially requires that the graph is sufficiently well connected.
{ 
\begin{assumption}[$\zeta$-Well-connectedness condition]\label{assump:weakwellconnect}
    For a fixed $\zeta \in (0,1]$, for any $J \subset [n]$ such that $\#(J) \leq n/2$, there exists an index $j \in J$ such that 
    \begin{equation} \label{eq:weakwellconnect}
    	\Big(\frac{2}{\zeta} - 1\Big) \#\Big[ E^j \cap \big([n] \setminus J\big) \Big]  > \#\Big[ E^j \cap J \Big]. 
    \end{equation}
\end{assumption}
In words, this assumption requires that inside any set of at most $n/2$ nodes, there is a node that is connected to a significant number of nodes outside this set. The condition in this assumption is equivalent to requiring that
\begin{equation}\label{eq:weakwellconnect2}
     \#\Big[ E^j \cap J \Big] < \Big(1-\frac{\zeta}{2}\Big) n_j. 
\end{equation}
We include a discussion of this condition and its connection with random graphs, conductance, and expanders later in Section~\ref{subsec:assumpdiscuss}.

For the case of Assumption~\ref{assump:weakwellconnect} with $\zeta = 1$, we just call the graph well-connected. While fully connected graphs satisfy this condition, there exist many more examples of graphs satisfying it with $\zeta = 1$ as well, and we give some examples of some simple graphs that meet this assumption in Figure~\ref{fig:well}.

}

\begin{figure}[ht]
	\centering
	\includegraphics[width=0.8\columnwidth]{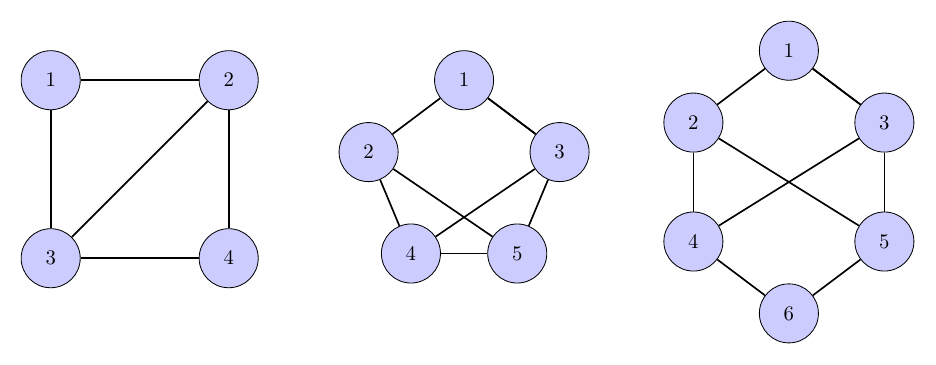}
	\caption{Examples of graphs that satisfy the well-connectedness condition for $n=4,5$ and $6$. In each of these graphs,~\eqref{eq:weakwellconnect} is satisfied with $\zeta=1$: that is, all subsets $J$ of size at most $n/2$, there exists a node $j \in J$ such that $\#(E^j  \cap ([n] \setminus J)) > \#(E^j  \cap J)$.}\label{fig:well}
\end{figure}

The following theorem gives the main recovery result for the DTAS algorithm. While the algorithm converges linearly, the rate we derive depends on $n$ and is worst-case. In the few simulations we have run, the algorithm seems to converge at a faster rate that merits more study. Also, a more complicated proof may yield linear convergence in the general case of well-connected $G$, but for sake of brevity, we only prove it for the fully connected case.
\begin{theorem}\label{thm:tasrecovery}
	Suppose that $\alpha_0 < \zeta/4$, Assumption~\ref{assump:initso2} holds, $G$ satisfies Assumption~\ref{assump:weakwellconnect} with parameter $\zeta$, and $[\bz(t)]_{t \in \nats}$ is the sequence generated by DTAS, for $\eta \in (0,1)$ and $\tau = \zeta/4$. Then, $\delta(\bz(t)) \to 0$, and the algorithm exactly recovers $\bz^\star$.
	Furthermore, in the case where $G$ is fully connected, the DTAS algorithm linearly converges to $\bz^\star$.
\end{theorem}
    \begin{proof}
    The proof of convergence under well-connectedness follows from the fact that,  when updating index $j$ at iteration $t$, the selection rule defined by choosing the trimmed average yields a point in the interior of $\cT_{\tau}(\{  \Log_{z_j(t)}(z_{jk} z_k(t)) : k \in E^j\})$. The proof then follows from the proof of Theorem~\ref{thm:sodrecovery}.
	 
	 To see linear convergence in the fully connected case, we prove that all normalization products $\overline{z_j^{\star}} z_j(t)$ contract during each pass over the dataset. Denote
	\begin{equation}
		\delta_j = \theta_j(t) - \theta_j^\star \in [-\delta(\bz(t))/2, \delta(\bz(t))/2].
	\end{equation}
	These are the translation of the normalization products to the angular coordinates of the points $z_1(t), \dots, z_n(t)$. 
	Also, define the sets
	\begin{equation}
		I_+(t) = \bset{k : \arg(\overline{z_j^\star} z_j(t) ) > 0}, \quad I_-(t) = \bset{k : \arg(\overline{z_j^\star} z_j(t) ) \leq 0}.
	\end{equation}	
	In this proof, we will write $\delta = \delta(\bz(t))$ as a shorthand.
	Notice that we must have 
	$$\min(\# I_+(t), I_-(t)) \leq n/2,$$
	unless $I_+(t) = I_-(t)) = [n]$, in which case $\bz(t)= \bz^\star w$ for some $w \in \C_1$ and  $\bz(t)$ recovers $\bz^\star$.

	For the update with respect to index $j$, all good pairwise measurements must lie in $-\delta_j + [-\delta/2, \delta/2]$. Since there are at least $3n_j/4$ good measurements, all trimmed points must lie in this interval as well. Therefore, for all $j \in I_-(t)$, after updating we have
	\begin{equation}
		\overline{z_j^\star} z_j(t) \in \exp\big[ i [-\delta/2, \eta \delta/2) \big].
	\end{equation}
	
	Using this fact, we will now show that the indices in $I_+(t)$ must move inwards. Indeed, since $\# I_+ (t) \leq n/2$, we must have $\# (E_g^j \cap I_-(t)) \geq 1$ for all $j \in I_+(t)$. Therefore, for each trimmed mean for $j \in I_+(t)$, we have
	\begin{align} 
		\ave \left( \cT_{0.25} \left( \bset{  \Log_{z_j(t)}(z_{jk} z_k(t)) : k \neq j} \right) \right) &\leq \frac{2}{n-1} \Big[ \eta \frac{\delta}{2}-\delta_j + \Big( \frac{n-1}{2} - 1 \Big) \Big( \frac{\delta}{2} -\delta_j \Big) \Big] \\ \nonumber
													     & \leq \Big( \frac{n-3}{n-1} - \frac{2}{n-1}\eta \Big) \cdot \frac{\delta}{2} - \delta_j.
	 \end{align}
	 Thus,
	\begin{align} 
		\eta \ave \left( \cT_{0.25} \left( \bset{  \Log_{z_j(t)}(z_{jk} z_k(t)) : k \in E^j} \right) \right) + \delta_j &\leq \eta \Big( \frac{n-3 - 2 \eta }{n-1}\Big) \cdot \frac{\delta}{2} + (1-\eta) \delta_j \\ \nonumber
															     &\leq \eta \Big( \frac{n-3}{n-1}\Big) \cdot \frac{\delta}{2} + (1-\eta) \frac{\delta}{2} \\ \nonumber
															     &= \frac{\delta}{2} \Big( 1 - \eta \Big[\frac{2}{n-1} \Big] \Big).
	 \end{align}
	
	 After the coordinate update, we have that for all $j \in I_+(t)$,
	\begin{equation}
		\overline{z_j^\star} z_j(t) \in \exp\Big( i \Big[-\frac{\delta}{2}, \Big(1 - \eta \Big( \frac{2}{n-1} \Big)  \frac{\delta}{2} \Big) \Big] \Big).
	\end{equation}

	After repeating this argument for all $j$ over the course of an epoch, or pass over all indices $j=1,\dots, n$, this yields that
    $$\overline{z_j^\star}  z_j(t+1) \in \exp\Big( i \Big[-\frac{\delta}{2}, \Big(1 - \eta \Big( \frac{2}{n-1} \Big)  \frac{\delta}{2} \Big) \Big] \Big),$$
	    as long as $\eta < (n-1)/(n+1)$.
	    The width of this interval is $ (n - 1 - \eta) \delta(\bz(t))/(n-1)$, which yields the desired result.\qed
\end{proof}

\section{Robust Synchronization over $\SO(D)$}
\label{sec:dds}

This section presents a novel algorithm for robust synchronization over the rotation group, $\SO(D)$. We assume a fixed observation graph $G$ that encodes which pairwise rotations we observe. The pairwise rotations are written as ${\bR}_{jk} \in \SO(D)$, where the good edges $jk \in E_g$ have the associated observation $\bR_j^\star \bR_k^{\star \top}$, and the bad edges are arbitrarily chosen from $G$ and have arbitrarily corrupted measurements.

To proceed, we must make clear our goal for the synchronization problem, since there is a well known ambiguity, similar to the one encountered for $\SO(2)$ synchronization in Section~\ref{subsec:geomc1} -- we can only recover $\bR^\star$ up to right multiplication by an element of $\SO(D)$. This is because, after this multiplication, one arrives at the same pairwise measurements in~\eqref{eq:meas}. This is a form of rotational symmetry in the nonconvex problem, which may be leveraged to develop tractable nonconvex programs~\citep{zhang2020symmetry}. Exactly recovering the ground truth measurements $(\bR^\star) = (\bR_1^\star, \dots, \bR_n^\star) \in \SO(D)^n$ up to right multiplication by $\bS \in \SO(D)$ is equivalent to finding a set of rotations $(\bR) = (\bR_1, \dots, \bR_n)$ such that
\begin{equation}\label{eq:reccond}
    \bR_1^{\star \top} \bR_1 = \dots =  \bR_n^{\star \top} \bR_n = \bS,
\end{equation}
for some $\bS \in \SO(D)$.
We refer to the set of rotations $\bR_j^{\star \top} \bR_j$, $j=1,\dots,n$, as \emph{normalization products} since, when $(\bR) = (\bR^{\star} \bS)$, they reveal the normalization factor that multiplies each element of $(\bR^\star)$ from the right. One could extend this discussion to the case of approximate recovery by requiring that the normalization products are approximately equal.

With our goal now in mind, we begin in Section \ref{subsec:sod} by discussing the geometry of the manifold $\SO(D)$ and presenting some basic geometric results that will be used in our main theorem.
Following this, Section \ref{subsec:depth} reviews the concept of halfspace depth from robust statistics, which will be the core tool that we use to construct our algorithm. In Section \ref{subsec:ddsalg}, we give outline the DDS algorithm. Then, in Section~\ref{subsec:ddsrecov}, we give the theoretical guarantees that constitute the main innovations of this work. We finish in Section \ref{subsec:assumpdiscuss} with a discussion of the assumptions we make in our theorem.

\subsection{The Manifold Structure of $\SO(D)$}
\label{subsec:sod}

The rotation synchronization problem is obviously a robust recovery problem on the product Riemannian manifold $\SO(D)^n$. Therefore, in the following, we freely use concepts from Riemannian geometry, and specifically those concepts related to the geometry of $\SO(D)$. 

\subsubsection{{Riemannian Geometry of $\SO(D)$}}

The set of rotations $\SO(D)$ is a $D(D-1)/2$-dimensional Lie group that has a natural Riemannian structure. The bi-invariant distance metric $d: \SO(D) \times \SO(D) \to [0,\lfloor \frac{D}{2} \rfloor \pi]$ is given by
\begin{equation}
	d(\bR_1, \bR_2) = \|\log(\bR_1 \bR_2^\top)\|_F,
\end{equation}
where $\log$ is the matrix logarithm. The corresponding Lie algebra is $\mathfrak{so}(D)$, the set of $D \times D$ skew-symmetric matrices. The tangent space of $\SO(D)$ at $\bR \in \SO(D)$ is 
$$ T_{\bR} \SO(D) = \{\bDelta \in \mathbb{R}^{d \times d} : \bR^\top \bDelta \in \mathfrak{so}(D)\} .$$
Notice that every tangent vector $v \in T_{\bR} \SO(D)$ has a corresponding element of $\mathfrak{so}(D)$, which we denote by $v_{\mathfrak{so}(D)}$.
The corresponding Riemannian metric (which is an inner product and thus should not be confused with a distance metric) for $v, w \in T_{\bR} \SO(D)$ is given by $\langle v,w \rangle_{\bR} = \Tr(v^\top w)/2 = \Tr(v_{\mathfrak{so}(D)}^\top w_{\mathfrak{so}(D)})/2$. Equipped with this metric, $\SO(D)$ is a Riemannian manifold with nonnegative sectional curvature. 
An open ball with respect to the metric $d$ is written as $B(\bR, r)$, where the radius is $r$ and the center is $\bR$. Its closure and boundary are $\overline{B(\bR, r)}$ and $\partial B(\bR, r)$, respectively.

The exponential map is given by
\begin{equation}
	\Exp_{\bR}: T_{\bR}\SO(D) \to \SO(D), \quad \Exp_{\bR}(\bU) = \bR \exp(\bR^\top \bU),
\end{equation}
where $\exp$ is the matrix exponential. 
The logarithmic map is the inverse of this: 
\begin{equation}
	\Log_{\bR}: \SO(D) \to T_{\bR}\SO(D), \quad \Log_{\bR}(\bS) = \bR \log (\bR^\top \bS).
\end{equation} 
The geodesic between $\bR, \bS \in \SO(D)$ is written as $\overrightarrow{\bR\bS}(t) = \Exp_{\bR}(t \Log_{\bR}(\bS)) $, for $t \in [0,1]$.
   In the following, we use the notation for a halfspace of $T_{\bR} \SO(D)$,
\begin{equation}\label{eq:halfsp}
	\cH(\bR, v) = \{u \in T_{\bR} \SO(D) : \langle u, v \rangle_{\bR} >0, \ \Exp_{\bR}(tu) \text{is a geodesic for } t \in [0,1] \}.
\end{equation}

\subsubsection{{Local Convexity Properties of $\SO(D)$}}

We continue by recalling some local convexity properties of manifolds like $\SO(D)$.
The following result on the convexity of sufficiently small balls is standard in the literature~\citep{karcher1977riemannian,afsari2009means,petersen2016riemannian}.
\begin{theorem}\label{thm:intgeo}[Convexity of small balls]
	In a closed ball $\overline{B(\bC, r)} \subset \SO(D)$ with $r < \pi/2$, the squared distance metric $d^2$ is strictly convex. This implies, in particular, that for all $\bR_0, \bR_1 \in \overline{B(\bC, r)}$, $\overrightarrow{\bR_0\bR_1}(t) \in B(\bC, r)$ for all $t \in (0,1)$. 
\end{theorem}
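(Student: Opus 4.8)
The statement is the classical convexity-radius estimate for geodesic balls, so the first move is to cite~\citep{karcher1977riemannian,afsari2009means,petersen2016riemannian}; below I outline how I would reconstruct it, specialized to $\SO(D)$. The argument hinges on two standard structural facts: with the bi-invariant metric above, $\SO(D)$ has sectional curvature in $[0,\tfrac14]$ (the value $\tfrac14$ being attained on three-dimensional subalgebras isomorphic to $\mathfrak{so}(3)$) and injectivity radius $\pi$; since $\tfrac12\min\{\pi,\pi/\sqrt{1/4}\}=\tfrac12\min\{\pi,2\pi\}=\tfrac{\pi}{2}$, this is exactly what pins the threshold at $\pi/2$. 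I would establish two things at once: that $\overline{B(\bC,r)}$ is geodesically convex, and that $f:=\tfrac12 d(\cdot,\bC)^2$ is strictly geodesically convex on it; the joint function $d(\cdot,\cdot)^2$ on the product ball would then follow by running the same comparison estimates on the second variation of arc length, as in the references.

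First I would compute the Hessian of $f$. Since $r<\tfrac{\pi}{2}<\pi$ and $\pi$ is the injectivity radius, no point of $B(\bC,\pi)$ lies in the cut locus of $\bC$, so $\rho:=d(\cdot,\bC)$ is smooth on $B(\bC,\pi)\setminus\{\bC\}$ and $f$ is smooth on all of $B(\bC,\pi)$. By the Hessian comparison theorem with the upper curvature bound $\kappa=\tfrac14$, together with the Gauss lemma,
\[
\mathrm{Hess}\,f \;\succeq\; \Big(\tfrac{\rho}{2}\cot(\tfrac{\rho}{2})\Big)\,g \qquad\text{on }B(\bC,\pi),
\]
where the coefficient tends to $1$ as $\rho\to0$ and is strictly positive for $\rho<\pi$. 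Thus $\mathrm{Hess}\,f$ is positive definite on the \emph{entire} ball $B(\bC,\pi)$, which is strictly larger than $\overline{B(\bC,r)}$ — and this slack is what the next step exploits.

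Next I would prove the ball is geodesically convex by a maximum-principle argument. Let $\bR_0,\bR_1\in\overline{B(\bC,r)}$. Since $d(\bR_0,\bR_1)\le2r<\pi=\mathrm{inj}(\SO(D))$, the map $\Log_{\bR_0}(\bR_1)$ is defined and $\gamma:=\overrightarrow{\bR_0\bR_1}$ is the unique minimizing geodesic joining them; set $h(t)=d(\gamma(t),\bC)^2$. Suppose $h(t_1)\ge r^2$ for some $t_1\in(0,1)$. Then $\max_{[0,1]}h\ge r^2$, and this maximum must be attained at some interior $t_0\in(0,1)$: were it attained only at an endpoint, that value would be $\le r^2$, forcing $h(t_1)<r^2$. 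At $t_0$ we have $h'(t_0)=0$ and $h''(t_0)\le0$. On the other hand, applying the triangle inequality from $\gamma(t_0)$ through each of the two endpoints of $\gamma$ and summing,
\[
2\,d(\gamma(t_0),\bC)\;\le\; d(\bR_0,\bR_1)+d(\bR_0,\bC)+d(\bR_1,\bC)\;\le\;2r+r+r\;=\;4r ,
\]
so $d(\gamma(t_0),\bC)\le2r<\pi$ and hence $\gamma(t_0)\in B(\bC,\pi)$; therefore $h''(t_0)=2\,\mathrm{Hess}\,f(\dot\gamma(t_0),\dot\gamma(t_0))>0$ by the preceding step — a contradiction. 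So $h(t)<r^2$ for every $t\in(0,1)$ (the borderline case where the interior maximum equals $r^2$ is killed by the same Hessian inequality, valid there since $d(\gamma(t_0),\bC)=r<\pi$), which is precisely $\overrightarrow{\bR_0\bR_1}(t)\in B(\bC,r)$ for $t\in(0,1)$. Once this is known, every geodesic between two points of $\overline{B(\bC,r)}$ lies in $\overline{B(\bC,r)}\subset B(\bC,\pi)$, so along each of them $h''=2\,\mathrm{Hess}\,f(\dot\gamma,\dot\gamma)>0$, giving strict geodesic convexity of $d(\cdot,\bC)^2$ on $\overline{B(\bC,r)}$.

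The step I expect to be the real obstacle is pinning down the sharp constant $\pi/2$: the crude bound $d(\gamma(t_0),\bC)\le d(\bR_0,\bR_1)+d(\bR_0,\bC)\le3r$ only yields $r<\pi/3$, and it is the symmetric two-sided estimate above — averaging the bounds through both endpoints — that recovers the full range $r<\pi/2$. The other point needing care is verifying the curvature bound $\tfrac14$ and the injectivity radius $\pi$ for $\SO(D)$ under the present normalization, so that $\tfrac12\min\{\mathrm{inj},\pi/\sqrt{\kappa}\}$ comes out to exactly $\pi/2$; both are classical.
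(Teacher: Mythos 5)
The paper offers no proof of this theorem---it is stated as standard and attributed to \citet{karcher1977riemannian,afsari2009means,petersen2016riemannian}---so the relevant question is whether your reconstruction of the classical argument is sound. It is for $D=2,3$, but it has a genuine gap for $D\ge 4$: the curvature bound $\mathrm{sec}\le\tfrac14$ is false there. With the metric $\langle X,Y\rangle=-\tfrac12\Tr(XY)$, take the orthonormal pair $X=(E_{12}+E_{34})/\sqrt2$, $Y=(E_{13}-E_{24})/\sqrt2$ in $\mathfrak{so}(4)\subset\mathfrak{so}(D)$, where $E_{ij}=e_ie_j^\top-e_je_i^\top$. A direct computation gives $[X,Y]=-(E_{14}+E_{23})$, so $\|[X,Y]\|^2=2$ and $K(X,Y)=\tfrac14\|[X,Y]\|^2=\tfrac12$. (This pair spans a self-dual $\mathfrak{su}(2)$ on which the induced metric rescales the bracket; looking only at subalgebras isomorphic to a standard $\mathfrak{so}(3)$ misses the extremal planes.) So $\Delta=\tfrac12$ for $D\ge4$, and the Hessian comparison guarantees positive definiteness of $\mathrm{Hess}\,\tfrac12 d^2(\cdot,\bC)$ only for $\rho<\pi/(2\sqrt{\Delta})=\pi/\sqrt2\approx2.22$, not for $\rho<\pi$. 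This is not merely a lost constant: along the balanced geodesic direction above, the tangential Hessian eigenvalue is exactly $\tfrac{\rho}{\sqrt2}\cot(\tfrac{\rho}{\sqrt2})$, which vanishes at $\rho=\pi/\sqrt2$ and is negative beyond, well inside the injectivity radius $\pi$. Your argument localizes the connecting geodesic only to $\overline{B(\bC,2r)}$ with $2r$ up to $\pi$, so for $r\in[\pi/(2\sqrt2),\pi/2)$ you are invoking a Hessian sign you do not control; as written the proof delivers only $r<\pi/(2\sqrt2)$ when $D\ge4$. (You correctly flagged verification of the constant $\tfrac14$ as the delicate point; this is exactly where it fails.)

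The theorem itself survives, since the classical convexity-radius bound $\tfrac12\min\{\mathrm{inj},\pi/\sqrt{\Delta}\}=\tfrac12\min\{\pi,\pi\sqrt2\}$ is still $\pi/2$, and on the ball $B(\bC,r)$ itself one has $r<\pi/2<\pi/\sqrt2$, so $\mathrm{Hess}\,d^2(\cdot,\bC)$ is positive definite at every point of the ball. What must change is your interior-maximum step: replace it with the usual continuity (first-violation) argument. Deform $\bR_1$ toward $\bR_0$ inside $\overline{B(\bC,r)}$ and consider the first parameter at which $\max_t d(\gamma(t),\bC)$ reaches $r$; at that moment the maximum is an interior tangency with the sphere of radius exactly $r$ (not $2r$) about $\bC$, where positivity of the Hessian gives the contradiction. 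The $2r$ localization is then needed only to keep the geodesic inside the injectivity radius, i.e., to ensure $d(\cdot,\bC)$ is smooth along it, which is the role $2r<\pi$ can legitimately play. With that repair, and with the strict convexity of $d^2(\cdot,\bC)$ on the (now known to be convex) ball following immediately, your route matches the cited references.
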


Note that the closed ball in the previous theorem has the property that the interior of any nonconstant geodesic lies strictly in the interior of the ball.
The following result is also readily apparent. It states that, for boundary points on a sufficiently small ball in $\SO(D)$, all interior directions are contained in a halfspace.
\begin{corollary}\label{cor:sodhalfsp}
    Let $\overline{B}$ be a ball on $\SO(D)$ with radius $r < \pi/2$ and $\bR \in \partial B$. Then, there is a halfspace $\cH \subset T_{\bR}\SO(D)$ such that $\Log_{\bR} \overline{B} \subset \cH$.
\end{corollary}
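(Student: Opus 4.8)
The plan is to take for $\cH$ the halfspace pointing from $\bR$ toward the center of the ball. Write $\overline B = \overline{B(\bC,r)}$ with $r<\pi/2$, set $v := \Log_{\bR}(\bC)\in T_{\bR}\SO(D)$, and let $\cH := \cH(\bR,v)$ as defined in~\eqref{eq:halfsp}. Fix $\bS\in\overline B$ with $\bS\neq\bR$, put $u := \Log_{\bR}(\bS)$ and $\gamma := \overrightarrow{\bR\bS}$, so that $\gamma(t)=\Exp_{\bR}(tu)$, $\gamma(0)=\bR$, $\gamma(1)=\bS$, and the initial velocity of $\gamma$ is $u$. We must verify the two defining conditions of $\cH$: that $t\mapsto\Exp_{\bR}(tu)$ is a geodesic on $[0,1]$, and that $\langle u,v\rangle_{\bR}>0$.

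The geodesic condition is immediate. Since $\bR$ and $\bS$ both lie in $\overline{B(\bC,r)}$ with $r<\pi/2$, the ball is strongly geodesically convex and sits below the relevant injectivity radius, so $\gamma$ is the unique minimizing geodesic joining $\bR$ to $\bS$ and, by Theorem~\ref{thm:intgeo}, $\gamma(t)\in\overline{B(\bC,r)}$ for all $t\in[0,1]$ (and in the interior for $t\in(0,1)$). In particular $t\mapsto\Exp_{\bR}(tu)$ is a geodesic for $t\in[0,1]$, which is what~\eqref{eq:halfsp} requires.

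For the inner-product condition, consider $f(t):=d^2(\gamma(t),\bC)$ on $[0,1]$. Taking the second argument of $d^2$ to be the constant geodesic at $\bC$, strict convexity of $d^2$ on $\overline{B(\bC,r)}$ from Theorem~\ref{thm:intgeo} makes $f$ a strictly convex, differentiable function on $[0,1]$. The first-variation identity $\mathrm{grad}_x\,\tfrac12 d^2(x,\bC)=-\Log_x(\bC)$ together with the chain rule gives $f'(0)=2\langle u,-\Log_{\bR}(\bC)\rangle_{\bR}=-2\langle u,v\rangle_{\bR}$. Since a strictly convex differentiable function lies strictly above its tangent line at an interior-side endpoint, $f(1)>f(0)+f'(0)$. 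Finally, $\bR\in\partial B$ gives $f(0)=d^2(\bR,\bC)=r^2$, while $\bS\in\overline B$ gives $f(1)=d^2(\bS,\bC)\le r^2$; hence $r^2\ge f(1)>r^2-2\langle u,v\rangle_{\bR}$, i.e. $\langle u,v\rangle_{\bR}>0$, as needed. (The single point $\bS=\bR$ yields $\Log_{\bR}(\bR)=0$, which one either excludes or absorbs into the closed halfspace; this is the only spot where the strict inequality in~\eqref{eq:halfsp} is delicate.)

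I expect the only real friction to be bookkeeping rather than a substantive obstacle: namely, correctly extracting strict convexity of $t\mapsto d^2(\gamma(t),\bC)$ from the joint strict convexity asserted in Theorem~\ref{thm:intgeo}, and invoking the first-variation formula with the right sign and the $\tfrac12$-normalization of the metric $\langle\cdot,\cdot\rangle_{\bR}$. Everything else follows directly from Theorem~\ref{thm:intgeo} and standard properties of the exponential map on a geodesically convex ball of radius below $\pi/2$.
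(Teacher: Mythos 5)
Your proof is correct and follows exactly the route the paper intends: the corollary is stated without proof as ``readily apparent'' from Theorem~\ref{thm:intgeo}, and your choice of the inward-pointing halfspace $\cH(\bR,\Log_{\bR}\bC)$ combined with the first-variation/strict-convexity argument for $t\mapsto d^2(\gamma(t),\bC)$ is precisely the justification the paper uses implicitly elsewhere (compare the first-order argument in the proof of Lemma~\ref{lemma:shrinkball}). Your parenthetical about $\Log_{\bR}\bR=0$ failing the strict inequality in~\eqref{eq:halfsp} identifies a genuine (minor) imprecision in the statement itself, correctly resolved by reading $\cH$ as the closed halfspace.
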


An important component of our later theoretical results relies on showing that the radius of the smallest ball containing a set of rotations shrinks.
The final lemma of this section states that if a discrete set of rotations $\cR$ is contained in a ball, and if half of the ball contains no boundary measurements, then the set $\cR$ is actually contained in a ball of smaller radius. Thus, for such a set of rotations, this gives us a sufficient condition for decreasing the radius of the smallest containing ball.
\begin{lemma}\label{lemma:shrinkball}
    Let $B=B(\bC,r) \subset \SO(D)$ and $\cR \subset \overline{B}$ a finite set of rotations. Suppose that there exists $v \in T_{\bC} \SO(D)$ such that 
    $$ \Exp_{\bC}[\overline{\cH(\bC,-v)}] \cap \partial B \cap \cR  = \emptyset.$$
    Then, $\cR$ is contained in a ball with radius less than $r$.
\end{lemma}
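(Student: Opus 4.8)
The geometric content of the hypothesis is that $\Exp_{\bC}\big[\overline{\cH(\bC,-v)}\big]\cap\partial B$ is the closed ``hemisphere'' of the sphere $\partial B$ lying on the far side of $\bC$ from $v$, so the assumption says exactly that every element of $\cR$ that actually meets $\partial B$ lies in the complementary open hemisphere, the one toward $v$. The plan is then the natural one: slide the center a short distance in the direction $v$. Boundary points of $\cR$ get pulled strictly inside the old radius, interior points stay inside by the triangle inequality, and finiteness of $\cR$ upgrades ``strictly inside'' to a uniform decrease of the radius. We may assume $\cR\neq\emptyset$ and $v\neq\bzero$ (otherwise $\overline{\cH(\bC,-v)}=\emptyset$ and the hypothesis is vacuous), and we normalize $v$ so that $\|v\|_{\bC}=1$.

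First I would record the two ``margins'' that finiteness supplies. Partition $\cR=\cR_{\mathrm{int}}\sqcup\cR_{\partial}$ with $\cR_{\mathrm{int}}=\cR\cap B$ and $\cR_{\partial}=\cR\cap\partial B$; since $\cR_{\mathrm{int}}$ is finite there is $\rho>0$ with $d(\bC,\bR)\le r-\rho$ for all $\bR\in\cR_{\mathrm{int}}$. For $\bR\in\cR_{\partial}$ put $u_{\bR}:=\Log_{\bC}(\bR)$; unwinding \eqref{eq:halfsp}, and using that $\Exp_{\bC}$ is injective on $\overline B$ (as $r<\pi/2$), the condition $\bR\notin\Exp_{\bC}\big[\overline{\cH(\bC,-v)}\big]$ is precisely $\langle u_{\bR},v\rangle_{\bC}>0$. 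Now move the center along the unit-speed geodesic $\bC_t:=\Exp_{\bC}(tv)$, $t\ge 0$, and set $f_{\bR}(t):=d(\bC_t,\bR)$. For $\bR\in\cR_{\mathrm{int}}$ the triangle inequality gives $f_{\bR}(t)\le d(\bC,\bR)+d(\bC,\bC_t)\le r-\rho+d(\bC,\bC_t)$, and $d(\bC,\bC_t)\to 0$ as $t\to 0^+$, so $f_{\bR}(t)\le r-\rho/2$ for all sufficiently small $t$.

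For $\bR\in\cR_{\partial}$ I would use that $\overline B$ lies inside a convexity ball (Theorem~\ref{thm:intgeo}), so $d(\cdot,\bR)$ is smooth on a neighborhood of $\bC$ and hence $f_{\bR}$ is $C^1$ near $0$; the first variation formula for arc length then gives $f_{\bR}'(0)=-\langle v,\,u_{\bR}/\|u_{\bR}\|_{\bC}\rangle_{\bC}$, which is $<0$ because $\langle u_{\bR},v\rangle_{\bC}>0$. Thus $f_{\bR}$ is strictly decreasing on some interval $[0,\tau_{\bR}]$ with $\tau_{\bR}>0$. Choosing $\tau>0$ smaller than every $\tau_{\bR}$ (a positive number, since $\cR_{\partial}$ is finite) and small enough for the interior estimate above, and setting $\bC':=\bC_{\tau}$, we get $d(\bC',\bR)\le r-\rho/2$ for $\bR\in\cR_{\mathrm{int}}$ and $d(\bC',\bR)=f_{\bR}(\tau)<r$ for $\bR\in\cR_{\partial}$, whence $r'':=\max_{\bR\in\cR_{\partial}}d(\bC',\bR)<r$ by finiteness again. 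Taking $r':=\max\{\,r-\rho/2,\;r''\,\}<r$ yields $\cR\subset\overline{B(\bC',r')}$, which is the assertion. (The degenerate cases $\cR_{\mathrm{int}}=\emptyset$ or $\cR_{\partial}=\emptyset$ are subsumed, interpreting the empty maxima appropriately.)

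The one step carrying real content is the boundary estimate in the third paragraph: one must be sure that nudging the center toward $v$ strictly decreases the distance to each point of $\cR_{\partial}$, and that this decrease is uniform over $\cR_{\partial}$. The uniformity is exactly what can fail for an infinite configuration, so finiteness of $\cR$ is used essentially --- both to obtain $\rho>0$ and to make $\min_{\bR\in\cR_{\partial}}\tau_{\bR}>0$ --- while the strict decrease itself is just the first variation of arc length, which is legitimate here because $\overline B$ sits inside a ball of radius $<\pi/2$ and so $d(\cdot,\bR)$ is smooth along $\bC_t$ for small $t$ (Theorem~\ref{thm:intgeo}). Everything else is elementary bookkeeping with the triangle inequality.
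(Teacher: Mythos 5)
Your proof is correct and follows essentially the same route as the paper's: slide the center along $\Exp_{\bC}(tv)$, use the first variation of distance to pull the boundary points (which by hypothesis satisfy $\langle \Log_{\bC}\bR, v\rangle_{\bC}>0$) strictly inward, and use finiteness to get a uniform margin for the remaining points. The only cosmetic difference is that you partition $\cR$ into interior versus boundary points while the paper partitions by halfspace; your version is slightly tidier since the triangle inequality handles all interior points without invoking the first-order expansion.
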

\begin{proof}
    Let $c(t)$ be the geodesic $\Exp_{\bC}(tv)$. We claim that, for $t$ sufficiently small, $\cR \subset B(c(t),r)$, which is an open ball. Heuristically, one should expect this to be true, since the closed halfspace $\overline{\cH(\bC,-v)}$ contains no boundary points, and so moving the center a small amount in the $v$ direction keeps all points within the ball.
    
    By the first order approximation to $d^2(c(t), \bR)$ and since $\angle( v, \Log_{\bC} \bR) < \pi/2$, we have
    \begin{equation}\label{eq:dbd1}
        d(c(t),\bR)^2 < d(c(0),\bR)^2, \ \forall \bR \in \cR \cap \Exp_{\bC}(\cH(\bC,v)),
    \end{equation}
    for $t$ sufficiently small.
    On the other hand, since 
    $$\Exp_{\bC}[\overline{\cH(\bC,-v)} ] \cap \partial B \cap \cR \subset B(\bC,r),$$ 
    the distance to $\bC$ over all $\bR \in \Exp_{\bC}[\overline{\cH(\bC,-v)} ] \cap \partial B \cap \cR$ is bounded away from $r$. By continuity of $d(c(t),\bS)$ for all $\bR \in \cR$, this implies that there is an $\epsilon$ such that
    \begin{equation}\label{eq:dbd2}
        d(c(t),\bR) < r, \ \forall \ t \in (0,\epsilon).
    \end{equation}
    Putting~\eqref{eq:dbd1} and~\eqref{eq:dbd2} together implies that a small shift of the ball results in a new center such that $\max_{\bR \in \cR} d(c(t),\bR) < r$. In turn, this means that all points of $\cR$ lie in a ball $\overline{B(c(t),r')}$ with $r' < r$. \qed
\end{proof}

\subsection{Tukey Depth and its Properties}
\label{subsec:depth}

We will use the concept of \emph{Tukey depth} to determine descent directions on the manifold $\SO(D)^n$, although other notions of depth could potentially be used as well (see Ch. 58 of~\citet{toth2017handbook} for a discussion of different notions of depth). In Euclidean space, the Tukey depth of a point $\bx\in \R^D$ in a dataset $\cX = \{\bx_1, \dots, \bx_n\} \subset \R^D$ is given by
\begin{equation}\label{eq:depth}
    \depth(\bx, \cX) = \min_{\bu \in S^{D-1}} \#\{\bx_i \in \cX: \bu^\top (\bx_i - \bx) \geq 0\}
\end{equation}
The depth is therefore the minimum number of points contained in any halfspace that has $\bx$ in its separating hyperplane.
A natural robust estimator is then the point of maximum depth, which is also called the Tukey median. 
The $\beta$-depth level set for $\beta \in [0,1]$ is defined by
\begin{equation}
    \cD_{\beta}(\cX) = \{\by \in \R^D : \depth(\by, \cX) \geq \beta\#(\cX) \}.
\end{equation}
This level set is convex and compact, and its boundary is made up of hyperplanes defined by sets of $D$ points~\citep{liu2019fast}.
This function will be used in the construction of our algorithm.

{
As an example, consider the 1-dimensional dataset $\cX=\{x_i\}$. Here, the formulation of depth is quite simple:
\begin{equation}
    \depth(x, \cX) = \min(\#\{x_i \leq x\}, \#\{x_i \geq x\}) .
\end{equation}
With this in mind, the $\beta$-depth level set is $ \cD_{\beta}(\cX) = [x_{(\lceil \beta n \rceil)}, x_{(\lfloor(1-\beta) n \rfloor)}],$ where $x_{(i)}$ denotes the $i$th order statistic. The Tukey median in this case is just the median.
}

We recall the following theorem, which bounds the maximum possible depth within a general dataset. Notice that, in particular, this guarantees that the depth level set $\cD_{\beta}(\cX)$ is nonempty for all $\beta \leq 1/(D+1) $.
\begin{proposition}[\cite{rado1946theorem}]\label{prop:mindepth}
	Suppose that $\cX$ is a set of $n$ points. Then, the maximum depth in $\cX$ is bounded below by $\lceil n/(D+1) \rceil$.
\end{proposition}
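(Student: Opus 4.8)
The plan is to prove the lower bound on the maximum Tukey depth via Helly's theorem, exploiting a duality between ``small'' halfspaces containing $\bx$ and open halfspaces through $\bx$ missing many points. First I would reformulate: a point $\bx$ has depth at least $m$ if and only if every closed halfspace with $\bx$ on its boundary contains at least $m$ points of $\cX$, equivalently if and only if no open halfspace with $\bx$ on its boundary contains more than $n-m$ points of $\cX$. Taking complements, I want to find $\bx$ such that, for every direction $\bu\in S^{D-1}$, the open halfspace $\{\by:\bu^\top(\by-\bx)>0\}$ omits at least $m$ points, i.e.\ contains at most $n-m$ points. Setting $m=\lceil n/(D+1)\rceil$, I want the ``complementary'' closed halfspace $\{\by:\bu^\top(\by-\bx)\le 0\}$ to always contain at least $\lceil n/(D+1)\rceil$ of the points.

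The key device is the family of convex sets obtained from the points. For each point $\bx_i\in\cX$, I would like to assign a convex set $C_i$ such that a common point of sufficiently many of the $C_i$ is a deep point. A clean way: for each point consider... actually the cleanest classical route (Rado's ``centerpoint theorem'') is this. Call a point $\bx$ a \emph{$\beta$-centerpoint} if every closed halfspace containing $\bx$ contains at least $\beta n$ points; this is exactly $\depth(\bx,\cX)\ge\beta n$. Consider the collection $\mathcal{F}$ of all open halfspaces $H$ such that $H$ contains more than $(1-\beta)n$ points of $\cX$ (where $\beta=1/(D+1)$, so $(1-\beta)n = \frac{D}{D+1}n$). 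I claim no point lies in the complement's worth... rather, I claim the complements $\{\R^D\setminus H : H\in\mathcal F\}$ — which are closed halfspaces each containing fewer than $\beta n$, hence at most $\lceil\beta n\rceil - 1$... hmm, let me instead run the standard argument directly.

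Here is the argument I would carry out. For $\beta = 1/(D+1)$, let $\mathcal{G}$ be the collection of all closed halfspaces $G$ with $\#(G\cap\cX) > n - \lceil n/(D+1)\rceil$, equivalently $\#(\cX\setminus G) < \lceil n/(D+1)\rceil \le n/(D+1)$... (taking care with ceilings). I would show any $D+1$ members $G_1,\dots,G_{D+1}$ of $\mathcal G$ have a common point: their complements are open halfspaces, each missing fewer than $n/(D+1)$ points, so together the $D+1$ complements contain strictly fewer than $(D+1)\cdot\frac{n}{D+1} = n$ points; hence some point $\bx_i\in\cX$ lies in none of the complements, i.e.\ in $\bigcap G_\ell$. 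Wait — I need a common point of the $G_\ell$ themselves, and $\bx_i\in\bigcap_\ell G_\ell$ does it. By Helly's theorem in $\R^D$ (all $G_\ell$ convex, closed; one needs a mild compactness/boundedness fix, handled by intersecting with a large ball or noting $\mathcal G$ is finite up to combinatorial type), the whole family $\mathcal G$ has a common point $\bx^\star$. Then every closed halfspace $G$ \emph{through} $\bx^\star$ contains at least $\lceil n/(D+1)\rceil$ points: if not, $\#(\cX\setminus G)\ge n - (\lceil n/(D+1)\rceil - 1)$, so the \emph{open} complement — or rather a slightly enlarged closed halfspace $G'\supsetneq G$ still avoiding those far points... one must be careful passing from the boundary case to a strict containment, but pushing the hyperplane slightly toward the side with fewer points produces a closed halfspace in $\mathcal G$ not containing $\bx^\star$, a contradiction. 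Hence $\depth(\bx^\star,\cX)\ge\lceil n/(D+1)\rceil$.

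The main obstacle — and the place I would be most careful — is the interplay of strictness and the ceiling: getting the exact bound $\lceil n/(D+1)\rceil$ (rather than $\lfloor\cdot\rfloor$ or an off-by-one weaker bound) requires precisely calibrating the threshold defining $\mathcal G$ and correctly handling open-versus-closed halfspaces when converting ``$\bx^\star$ is on the boundary of $G$'' into membership of a nearby halfspace in $\mathcal G$. A secondary technical point is the boundedness hypothesis for Helly's theorem: since the halfspaces are unbounded, I would either restrict to intersections with a sufficiently large closed ball containing $\mathrm{conv}(\cX)$, or observe that the family $\mathcal{G}$ realizes only finitely many combinatorial types (the relevant data being which subset of $\cX$ each halfspace contains), so the finite Helly theorem applies directly. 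Everything else — the counting argument for $D+1$ sets, convexity of halfspaces — is routine.
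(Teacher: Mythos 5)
Your proposal is correct. Note that the paper does not prove this proposition at all: it is stated as a classical result and attributed to Rado (1946), so there is no in-paper argument to compare against. What you have written is the standard Helly-based proof of the centerpoint theorem, and the two delicate points you flagged are exactly the right ones and are both handled correctly: the count $(D+1)\bigl(\lceil n/(D+1)\rceil-1\bigr)<n$ gives the pairwise-$(D+1)$ intersection property for the family $\mathcal G$ of closed halfspaces whose open complements contain fewer than $\lceil n/(D+1)\rceil$ points of $\cX$, and the final perturbation (pushing the bounding hyperplane slightly off $\bx^\star$ toward the sparse side to land a member of $\mathcal G$ avoiding $\bx^\star$) correctly converts the boundary case into a contradiction, matching the paper's closed-halfspace definition of $\depth$. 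The one spot worth tightening is the reduction of the infinite family $\mathcal G$ to a finite Helly instance: ``finitely many combinatorial types'' is not quite enough as stated, since distinct halfspaces containing the same subset of $\cX$ are distinct sets. The clean fix is to replace each $G\in\mathcal G$ by the compact convex set $\mathrm{conv}(G\cap\cX)\subset G$; there are only finitely many such hulls, any $D+1$ of them share a point of $\cX$ by your counting argument, and finite Helly then yields $\bx^\star$ lying in all of them, hence in every $G$. Your alternative of intersecting each $G$ with a large closed ball containing $\cX$ and invoking Helly for infinite families of compact convex sets works equally well.
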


A particularly useful property of Tukey depth is that it is \emph{affine equivariant}, that is, it is stable under affine transformations.
\begin{lemma}[\cite{donoho1992breakdown}]
    $$ \depth(\bA \bx+\bb, \bA \cX + \bb) = \depth(\bx, \cX).$$
\end{lemma}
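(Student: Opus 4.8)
The plan is to show that the two minimizations defining $\depth(\bA\bx+\bb,\bA\cX+\bb)$ and $\depth(\bx,\cX)$ run over the same family of halfspace-counts, merely reindexed by the unit normal. First I would fix a direction $\bu\in S^{D-1}$ and unwind the membership condition for the transformed configuration: a point $\bA\bx_i+\bb$ lies in the closed halfspace through $\bA\bx+\bb$ with inner normal $\bu$ precisely when $\bu^\top\big(\bA\bx_i+\bb-\bA\bx-\bb\big)\ge 0$, i.e.\ $(\bA^\top\bu)^\top(\bx_i-\bx)\ge 0$. Hence the number of transformed points in the $\bu$-halfspace equals the number of original points in the halfspace through $\bx$ with (unnormalized) normal $\bA^\top\bu$.

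Next I would observe that the sign of $(\bA^\top\bu)^\top(\bx_i-\bx)$ is unchanged under positive rescaling of $\bA^\top\bu$, so this count depends only on the ray spanned by $\bA^\top\bu$, equivalently on $\bA^\top\bu/\norm{\bA^\top\bu}\in S^{D-1}$; here I use that $\bA$ is nonsingular, so $\bA^\top\bu\neq\bzero$. The key step is then that $\bu\mapsto\bA^\top\bu/\norm{\bA^\top\bu}$ is a bijection of $S^{D-1}$ onto itself — surjectivity because, given $\bv\in S^{D-1}$, the normalization of $(\bA^\top)^{-1}\bv$ is a preimage. Consequently the family of halfspace-counts through $\bA\bx+\bb$ entering the first minimization coincides, term by term, with the family of halfspace-counts through $\bx$ entering the second, so the two minima are equal, which is exactly $\depth(\bA\bx+\bb,\bA\cX+\bb)=\depth(\bx,\cX)$.

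The only genuinely delicate point is this sphere bijection: one must resist conflating the image $\bA^\top S^{D-1}$ (an ellipsoid) with $S^{D-1}$ itself, and instead argue at the level of rays / normalized vectors, which is precisely where invertibility of $\bA$ enters. Everything else is a direct substitution. For completeness I would also remark that the statement genuinely requires $\bA$ invertible (a singular $\bA$ collapses the point configuration and can change the depth), and that the identical argument goes through verbatim if one uses strict inequalities in place of the non-strict ones in the definition of $\depth$.
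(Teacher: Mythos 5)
Your argument is correct and is the standard proof of affine equivariance; the paper itself states this lemma as a citation to Donoho and Gasko without supplying a proof, so there is nothing to diverge from. The substitution $\bu^\top(\bA\bx_i+\bb-\bA\bx-\bb)\ge 0 \iff (\bA^\top\bu)^\top(\bx_i-\bx)\ge 0$ together with the observation that $\bu\mapsto \bA^\top\bu/\norm{\bA^\top\bu}$ bijects $S^{D-1}$ onto itself is exactly the right mechanism, and you are right to flag that invertibility of $\bA$ is genuinely needed even though the paper's statement leaves that hypothesis implicit.
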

This implies, in particular, that things behave nicely if we change the inner product on $\mathbb{R}^D$.
\begin{align}
     \min_{\bu \in S^{D-1}} \#\{\bx_i \in \cX: \bu^\top \bA (\bx_i -\bx) \geq 0\} &= \depth(\bA \bx, \bA \cX) \\ \nonumber
    &= \depth(\bx, \cX).
\end{align}
With this property, we can map between the depth regions in $(\mathbb{R}^D, \langle\cdot, \cdot \rangle)$ and $(\mathbb{R}^D, \langle\cdot, A\cdot \rangle)$ by
\begin{equation}
    A \cD_{\beta}(\cX) =  \cD_{\beta}(A\cX).
\end{equation}
This affine equivariance implies, in particular, that Proposition~\ref{prop:mindepth} extends to datasets in tangent spaces of manifolds.

We finish with a simple lemma on depth level sets that will be the key to the robustness guarantee for our later algorithm. {In simple words, this lemma guarantees sufficient conditions for a depth level set to contain a nonzero value in a dataset containing many zeros.} In the following, datasets are represented by multisets and may contain duplicate points. For a halfspace $H(\bzero, \bv):= \{\bx \in \R^D : \bv^\top \bx \geq 0\} \subset \R^D$, we write its separating hyperplane as $L(\bzero,\bv)$.

{
\begin{lemma}\label{lemma:depthcont}
   Suppose that we have a dataset $\cX = \{\bx_1, \dots, \bx_n\} \in \R^D$ and a subset $\cY \subset \cX$ that satisfies the properties i) $\#(\cY) > n - n(\zeta /(2D+2))$, ii) There exists closed halfspace $\overline{H(\bzero,\bv)} \subset \R^D$ such that $\overline{H(\bzero,\bv)} \supset \cY$, and iii) the only points of $\cY$ in $L(\bzero, \bv)$ are $\bzero$.
   Then, $\cD_{\zeta/(2D+2)}(\cX) \subset \mathrm{conv}(\cY) \subset (H(\bzero, \bv) \cup \{\bzero\})$. Beyond this, if $\#(\cY \cap L(\bzero, \bv)) < (1-\zeta/2)n$, then $(\cD_{\zeta/(2D+2)}(\cX)) \cap H(\bzero, \bv) \neq \emptyset$.
\end{lemma}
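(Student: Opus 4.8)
The plan is to prove the three nested inclusions one at a time, the middle one being essentially trivial and the outer ones requiring a depth-counting argument. First, $\mathrm{conv}(\cY) \subset H(\bzero,\bv) \cup \{\bzero\}$ is immediate: by hypothesis (ii) every point of $\cY$ lies in the closed halfspace $\overline{H(\bzero,\bv)}$, which is convex, so $\mathrm{conv}(\cY) \subset \overline{H(\bzero,\bv)} = H(\bzero,\bv) \cup L(\bzero,\bv)$; combining with hypothesis (iii), which says $\cY$ meets the bounding hyperplane $L(\bzero,\bv)$ only possibly at $\bzero$, gives $\mathrm{conv}(\cY) \subset H(\bzero,\bv)\cup\{\bzero\}$ after noting that a convex combination of points of $\cY$ can land on $L(\bzero,\bv)$ only if all the points with nonzero weight already lie on $L(\bzero,\bv)$ (a standard fact about a supporting hyperplane of a convex set), hence must equal $\bzero$.

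Next I would show $\mathrm{relint}(\cD_{1/(2D+2)}(\cX)) \subset \mathrm{conv}(\cY)$. Suppose not, so there is a point $\by \in \mathrm{relint}(\cD_{1/(2D+2)}(\cX)) \setminus \mathrm{conv}(\cY)$. Since $\mathrm{conv}(\cY)$ is a closed convex set not containing $\by$, there is a separating hyperplane: a unit vector $\bw$ and a value so that $\bw^\top \bz < \bw^\top \by$ for all $\bz \in \mathrm{conv}(\cY)$, hence for all $\bz \in \cY$. Then the open halfspace $\{\bx : \bw^\top(\bx - \by) < 0\}$ — equivalently the closed complementary halfspace through $\by$ in direction $-\bw$ — contains \emph{all} of $\cY$, so it contains more than $n - n/(2D+2)$ points. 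Therefore the closed halfspace $\{\bx : \bw^\top(\bx-\by)\ge 0\}$ contains fewer than $n/(2D+2)$ points of $\cX$, which forces $\depth(\by,\cX) < n/(2D+2) = (1/(2D+2))\#(\cX)$, contradicting $\by \in \cD_{1/(2D+2)}(\cX)$. (I would be slightly careful about strict vs.\ non-strict inequalities and about the case where $\by$ lies on the boundary of $\mathrm{conv}(\cY)$, which is why the statement only claims containment of the relative interior rather than the whole level set; a point on a face of $\mathrm{conv}(\cY)$ can have a supporting halfspace containing only the face points, so the counting bound degrades — but such a point is not in the relative interior.)

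For the strengthened conclusion, assume in addition $\#(\cY \cap L(\bzero,\bv)) < n/2$, and suppose for contradiction that some $\by \in \mathrm{relint}(\cD_{1/(2D+2)}(\cX))$ lies on $L(\bzero,\bv)$ rather than in the open halfspace $H(\bzero,\bv)$; by the inclusion just proved $\by \in \mathrm{conv}(\cY) \cap L(\bzero,\bv)$. Now consider the halfspace $H(\bzero,-\bv)$. Its closure contains only those points of $\cX$ not in the open halfspace $H(\bzero,\bv)$; among points of $\cY$ these are exactly $\cY \cap L(\bzero,\bv)$, of which there are fewer than $n/2$, and the at most $n/(2D+2)$ points of $\cX \setminus \cY$. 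I would use $\by \in \mathrm{relint}(\cD_{1/(2D+2)}(\cX))$ together with the fact that $\by$ lies on the supporting hyperplane $L(\bzero,\bv)$ of $\mathrm{conv}(\cY)$ to push the center slightly in the $-\bv$ direction: the hyperplane $L(\by,\bv)$ through $\by$ normal to $\bv$ has the whole set $\cY$ (weakly) on one side, so the \emph{open} halfspace $\{\bx:\bv^\top(\bx-\by)>0\}$ misses all of $\cY\cap L(\bzero,\bv)$ and all of $\cY$'s hyperplane points, i.e.\ the closed opposite halfspace $\{\bx:\bv^\top(\bx-\by)\le 0\}$ contains fewer than $n/2$ points, contradicting the depth lower bound needed at an interior point of the level set — here is where I must exploit that $\by$ being in the \emph{relative interior} of a full-dimensional (by Proposition~\ref{prop:mindepth} the level set is nonempty, and one checks it is full-dimensional when $\by\in\mathrm{relint}$) convex body forces $\depth(\by,\cX)$ to be witnessed by halfspaces in \emph{every} direction, so in particular the count in direction $\bv$ at $\by$ must be at least $n/(2D+2)$, which is consistent, so I instead get the contradiction by a perturbation: a point strictly on one side would have a nearby point of strictly larger depth lying off $L(\bzero,\bv)$, impossible in the relative interior unless the level set has positive-dimensional intersection with $L(\bzero,\bv)$, which the count $\#(\cY\cap L(\bzero,\bv))<n/2$ combined with $\#(\cX\setminus\cY)<n/(2D+2)$ rules out.

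The main obstacle I anticipate is exactly this last part: making precise why the relative-interior hypothesis, plus the two counting bounds ($\#(\cY)>n-n/(2D+2)$ and $\#(\cY\cap L)<n/2$), forbid the level set from touching $L(\bzero,\bv)$. The cleanest route is probably a direct depth computation: for any candidate $\by\in\mathrm{conv}(\cY)\cap L(\bzero,\bv)$, exhibit a specific halfspace through $\by$ — namely the one bounded by $L(\by,\bv)=L(\bzero,\bv)$ itself, or a slight tilt of it — whose point count is strictly below $n/(2D+2)$, using that on the ``empty'' side sit only $\cX\setminus\cY$ (fewer than $n/(2D+2)$) plus possibly some of $\cY\cap L(\bzero,\bv)$, and then argue that being on the separating hyperplane rather than strictly inside means $\by$ can be perturbed off it to a point of no smaller depth, contradicting that $\by$ was interior to the level set. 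I would handle the bookkeeping of which points land in open versus closed halfspaces very carefully, since the whole lemma turns on those boundary cases.
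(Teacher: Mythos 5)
Your first two inclusions are sound and essentially match the paper's (which dismisses them as ``obvious by the properties of depth''): the separating-hyperplane count showing that any $\by\notin\mathrm{conv}(\cY)$ admits a closed halfspace through $\by$ containing fewer than $n/(2D+2)$ points of $\cX$, and the supporting-hyperplane observation that $\mathrm{conv}(\cY)\cap L(\bzero,\bv)\subseteq\{\bzero\}$, are both correct.

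The strengthened conclusion, however, has a genuine gap, and you half-admit it. Your count is off: the closed halfspace $\{\bx:\bv^\top\bx\le 0\}$ contains the $<n/2$ points of $\cY\cap L(\bzero,\bv)$ \emph{plus} up to $n/(2D+2)$ points of $\cX\setminus\cY$, so it contains fewer than $n/2+n/(2D+2)$ points, not fewer than $n/2$; and in any case a count below $n/2$ does not contradict the depth threshold $n/(2D+2)$, as you yourself note (``which is consistent''). The perturbation fallback is never made precise, and its key claim --- that a positive-dimensional intersection of the level set with $L(\bzero,\bv)$ is ruled out by the two counts --- is asserted, not proved. What is actually needed is a proof that $\cD_{1/(2D+2)}(\cX)$ contains a point of the \emph{open} halfspace $H(\bzero,\bv)$; once that is known, convexity of the level set together with $\cD_{1/(2D+2)}(\cX)\subset\overline{H(\bzero,\bv)}$ forces $\mathrm{relint}(\cD_{1/(2D+2)}(\cX))\subset H(\bzero,\bv)$. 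The paper gets this missing point by an auxiliary-set trick: set $\cZ=\cX\setminus(\cY\cap L(\bzero,\bv))$, so $m=\#(\cZ)\ge n/2$; Proposition~\ref{prop:mindepth} gives $\hat{\bz}$ with $\depth(\hat{\bz},\cZ)\ge m/(D+1)$; since every point of $\cZ$ with $\bv^\top\bx\le 0$ lies in $\cX\setminus\cY$, more than $m-n/(2D+2)\ge mD/(D+1)$ points of $\cZ$ lie in $H(\bzero,\bv)$, forcing $\hat{\bz}\in H(\bzero,\bv)$; and restoring the deleted points only increases closed-halfspace counts, so $\depth(\hat{\bz},\cX)\ge m/(D+1)\ge n/(2D+2)$. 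Without some argument of this kind your proof of the second assertion does not close.
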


\begin{proof}

It is obvious by the properties of depth that $\cD_{\zeta /(2D+2)}(\cX) \subset \mathrm{conv}(\cY) \subset (H(\bzero, \bv) \cup \{\bzero\})$, since any point on the boundary of $\mathrm{conv}(\cY)$ has depth less than $\zeta /(2D+2)$.
By Proposition~\ref{prop:mindepth}, there is a point of depth at least $\zeta/(2D+2)$, and so $D_{\zeta/(2D+2)}(\cX)$ is nonempty. 

Suppose that less than $(1-\zeta/2)n$ points in $\cY$ are zero, that is, $\#(\cY \cap L(\bzero, \bv)) <(1-\zeta/2)n$. We claim that $D_{\zeta/(2D+2)}(\cX) \cap H(\bzero, \bv) \neq \emptyset$. Define the auxiliary set $\cZ = \cX \setminus (\cY \cap L(\bzero, \bv))$, that is, $\cZ$ removes the $\bzero$ values in $\cY$ from $\cX$. Since $\#(\cY \cap L(\bzero, \bv)) < (1-\zeta/2)n$, we have that $m = \#(\cZ) \geq (\zeta/2)n$. Within $\cZ$, there is a point $\hat{z}$ of depth at least $m/(D+1)$. Further, since $n \leq 2m/\zeta$ and $\#(\cX \cap H(\bzero, -\bv)) < n\zeta/(2D+2)$, we have
\begin{align*}
    \#(H(\bzero, \bv) \cap \cZ ) &> m - \frac{n \zeta}{2D+2} \\
    &\geq m - \frac{m}{D+1)}\\ 
    &= m\frac{D}{D+1},
\end{align*}
which implies that $\hat{z}$ must lie in $H(\bzero, \bv)$.
On the other hand, since $m \geq \zeta n/2$, we have that 
\[
    \depth(\hat{\bz}, \cX) \geq \frac{m}{D+1} \geq \frac{\zeta n}{2(D+1)},
\]
which means that $\hat{z}$ is a point of depth at least $\zeta /(2D+2)$ in $\cX$. \qed
\end{proof}
}

\subsection{Depth Descent Synchronization}
\label{subsec:ddsalg}

We now use the results of the previous sections to derive the DDS algorithm.  

\subsubsection{{The General DDS Algorithm}}

We assume a \emph{selection rule} $\cS_{\bR}$ on convex, compact subsets of $T_{\bR} \SO(D)$ for all $\bR\in \SO(D)$. In particular, for our theorem, we assume that this selection rule chooses a nonzero point from the convex set if possible, and otherwise outputs zero. To select this point, one selection rule could be to take a point uniformly at random, or another could be to take the center of mass. As our theorem makes clear, the choice of this selection rule does not affect the exact recovery result, but it may change convergence rates. Since we do not give quantitative convergence rates in this work, we leave this choice as arbitrary. 

In any case, given a selection rule $\cS_{\bR}$ over such subsets of $T_{\bR} \SO(d)$, suppose our estimated rotations at time $t$ are $(\bR(t))$. Our update direction at this time for index $j = t \mod n$ is defined by
\begin{align}\label{eq:vj}
        \bv_j(t) &=  \cS_{\bR}( \cD_{\beta} (\{\Log_{\bR_j(t)} \bR_{jk}\bR_k(t): k \in E^j\})).
\end{align}
{In words, the direction $\bv_j(t)$ is a direction in the tangent space at $\bR_j(t)$ that is sufficiently deep with respect to the neighbor measurements $\{\Log_{\bR_j(t)} \bR_{jk}\bR_k(t): k \in E^j\}$.}
Given $\bv_j(t)$, the algorithm updates
\begin{equation}\label{eq:depthseq}
	j = t \mod n, \ \bR_j(t+1) = \Exp_{\bR_{j}(t)} (\eta(t) \bv_j(t)), \ \bR_k(t+1) = \bR_k(t) \text{ for } k \in [n] \setminus \{j\}.
\end{equation}
for a chosen step size $\eta(t) \in (0,\eta^\star(D)]$. Our theory below restricts this step size according to Theorem 4.2 of~\cite{afsari2013convergence}: in the case of $D=2$ or $3$, one can take $\eta^\star(D) = 1$, while for $D>3$ the upper bound is more restrictive (the reader can consult the discussion in \cite{afsari2013convergence} for a more thorough discussion of the bound).
For sake of clarity, we write the full DDS algorithm in Algorithm~\ref{alg:dds}.
\begin{algorithm}[ht]
    \caption{ Depth Descent Synchronization for $\SO(D)$}\label{alg:dds}
    \begin{algorithmic}
    \REQUIRE{$\bR(0)$, number of iterations $T$, selection rule $\cS$, $\eta \in (0,\eta^\star(D)]$ step size, $\beta$ depth parameter}
    \FOR{$t = 1, \ldots, T$}
    \STATE{$j = t \mod n$}
    \STATE{$\bv_j(t) =  \cS_{\bR}( D_{\beta} (\{\Log_{\bR_j(t)} \bR_{jk}\bR_k(t): k \in E^j\}))$}
    \STATE{$\bR_j(t+1) \gets \Exp_{\bR_{j}(t)} ( \eta \bv_j(t))$}
    \STATE{$\bR_k(t+1) \gets \bR_k(t), k \neq j$}
    \ENDFOR
    \RETURN{$\bR(T)$}
    
    \end{algorithmic}
\end{algorithm}

{

One benefit of DDS is that there is no need to tune the step size, which is due to the local convexity properties of the manifold. As we outline in our main theorem, the step size can be directly selected from the guidance of~\cite{afsari2013convergence}. 

As for computational complexity, at least in low-dimensions, depth regions can be calculated efficiently for small datasets~\citep{liu2019fast}. In particular, the most straightforward algorithm involves an exhaustive search over hyperplanes spanned by $D$-subsets of $\cX$ that cut off $\beta n$ points in $\cX$. The time complexity of this method for $\mathbb{R}^3$ is $O(n^3 \log(n))$, and so could be used for moderately sized datasets. Translating this to our problem in $\SO(3)$, the time complexity for updating $j \in [n]$ is $O(n_j^3 \log(n_j))$, and so we see that this method is efficient for sparser graphs. In an Erd\"os-R\'enyi model, the complexity to update all $n$ rotations is $O(n^4 p^3)$, where $p$ is the Erd\"os-R\'enyi parameter.

As discussed in the introduction, the time complexity of the DDS algorithm is not necessarily more efficient than that of~\cite{lerman2019robust}. Indeed, the complexity of their message-passing algorithm is $O(n^3)$ for a single update to all rotations, while for our method it is $O(\sum_j n_j^3 \log(n_j))$ for $\SO(3)$ (and much larger for higher dimensions). Therefore, DDS has better complexity for sparse graphs, while~\cite{lerman2019robust} has better complexity for dense graphs. On the other hand, our complexity is uniformly better in $\SO(2)$, where depth contours can be easily found in $O(n \log(n))$ by sorting, and it thus takes $O(n^2 \log(n))$ time to update all nodes. Finally, we also note that our method has better scaling in terms of memory usage: the multiple rotation averaging scheme takes $O(n_j)$ memory while the message-passing scheme takes $O(n^3)$.

\subsubsection{The Approximate DDS Algorithm} 

To make the DDS algorithm more computationally efficient, we employ a few strategies to develop an approximate DDS algorithm. 

First, so that we do not need to resort to computing full depth contours, we instead take the average of the deepest points in the set $\{\Log_{\bR_j(t)} \bR_{jk}\bR_k(t): k \in E^j\}$ as our update direction at each iteration. 

Second, to avoid computation of the full depth of every point in this set, we instead use an approximation of depth based on sampling. Suppose that we wish to approximate the depth of the vectors in $\cY = \{\by_1, \dots, \by_{n}\}\subset \R^D$ with respect to $\cY$. We can sample a set of vectors $\bu_1, \dots, \bu_m \in S^{D-1}$. Then, for each $\by_i \in \cY$, the approximation of depth, $\widetilde{\depth}$, is 
\begin{equation}\label{eq:approxdepth}
	\widetilde{\depth}(\by_i, \cY) = \min_{j\in [m]} \min\Big( \#(\{\by_k\in \cY : \by_k^\top\bu_j \geq \by_i^\top \bu_j \}),\#(\{\by_k \in \cY : \by_k^\top\bu_j \leq \by_i^\top \bu_j \})\Big).
\end{equation}
Notice that $\widetilde{\depth}$ replaces the minimum over $\bu \in S^{D-1}$ in~\eqref{eq:depth} by the minimum over the discrete set of vectors $\{\bu_1, \dots, \bu_m\}$.
For $\R^3$ (which corresponds to the tangent space for $\SO(3)$), computation of the full depth for all points would take $O(n_j^3)$ time, where for each $\bx_i$ one would need to search over all planes defined by triplets $\bx_i, \bx_j, \bx_k$, for distinct $i, j, k$. On the other hand,  the computation of the approximate depth using~\eqref{eq:approxdepth} takes $O(n_j^2 m)$ and can be more efficiently implemented due to the fact that the $\bu_1, \dots, \bu_m$ are shared between all $\by_i$. 

The approximate DDS algorithm is given in Algorithm~\ref{alg:adds}. Here, we use the notation $S_{\bR_{j}(t)} \SO(D)$ for the set of all unit vectors in $T_{\bR_j}\SO(D)$.


\begin{algorithm}[ht]
    \caption{ Approximate Depth Descent Synchronization for $\SO(D)$}\label{alg:adds}
    \begin{algorithmic}
    \REQUIRE{$\bR(0)$, number of iterations $T$, selection rule $\cS$, $\eta \in (0,1)$ step size, $m$: number of depth vectors}
    \FOR{$t = 1, \ldots, T$}
    \STATE{$j = t \mod n$}
    \STATE{$\cY(t) = \{\Log_{\bR_j(t)} \bR_{jk}\bR_k(t): k \in E^j\}$}
    \STATE{$\bu_1, \dots, \bu_{m} \overset{i.i.d.}{\sim} \mathsf{Unif}( S_{\bR_{j}(t)} \SO(D))$}
    \STATE{$\bv_j(t) =  \argmax_{\by \in \cY(t)}\Big[ \min_{\bu_i}\Big( \min(\#\{\by' \in \cY(t) : \bu_i^\top (\by' -\by) \geq 0 \},\#\{\by' \in \cY(t):\bu_i^\top (\by' -\by) \leq 0 \})\Big) \Big]$}
    \STATE{$\bR_j(t+1) \gets \Exp_{\bR_{j}(t)} ( \eta \bv_j(t))$}
    \STATE{$\bR_k(t+1) \gets \bR_k(t), k \neq j$}
    \ENDFOR
    \RETURN{$\bR(T)$}
    
    \end{algorithmic}
\end{algorithm}

}

\subsection{{Exact Recovery for DDS}}
\label{subsec:ddsrecov}

For many nonconvex methods, good initialization is quite important~\citep{li2019rapid,maunu2019well,qu2019convolutional,chi2019nonconvex,qu2020finding}. We only obtain a recovery result for DDS if we can initialize in a suitable neighborhood of $\bR^\star$.
\begin{definition}\label{def:neighborhood}
	A set of $n$ rotations $(\bR)$ lies within a $\rho$-neighborhood of $(\bR^\star)$ if the normalization products $\bR_j^{\star \top} \bR_j$ all lie in a ball of radius $\rho$. That is, there exists a $\bC \in \SO(D)$ such that 
	$$ d(\bR_j^{\star \top} \bR_j, \bC) < \rho, \ \forall  j = 1,\dots, n.$$
\end{definition}
With this terminology, our main assumption is that that we can initialize in a $\pi/2$-neighborhood of $(\bR^\star)$. This is the analog of Assumption~\ref{assump:initso2} for $\SO(D)$.
\begin{assumption}\label{assump:local}
	The initial set of rotations $(\bR(0))$ for our algorithm lies within a $\pi/2$-neighborhood of $(\bR^\star)$.
\end{assumption}
As we discuss in Section \ref{subsec:assumpdiscuss}, we believe that this assumption is not so restrictive, and we later give some intuition for how this might occur in a real scenario.

The following theorem constitutes the main theoretical result of this work. It states that, with proper initialization and well-connectedness, the recovery threshold of Algorithm~\ref{alg:dds} is $1/(D(D-1)+2)$. As examples when $\zeta = 1$, in the case of $\SO(2)$, Theorem~\ref{thm:sodrecovery} yields a corruption threshold of $\alpha_0 < 1/4$. In the case of $\SO(3)$, Theorem~\ref{thm:sodrecovery} yields a corruption level of $\alpha_0 < 1/8$.


{
\begin{theorem}\label{thm:sodrecovery}
	Suppose that $\alpha_0 < \zeta/(D(D-1) + 2)$, Assumptions~\ref{assump:local} and~\ref{assump:weakwellconnect} hold, and $[(\bR(t))]_{t \in \nats}$ is generated by~\eqref{eq:depthseq} with $\beta = \zeta/(D(D-1)+2)$. Further assume in the case of $D=2,3$ that $\eta \in (0,1]$, and in the case of $D>3$ that $\eta$ is chosen according to Theorem 4.2 of~\cite{afsari2013convergence}.
	 Then, $d(\bR_j^{\star \top} \bR_j(t), \bR_k^{\star \top} \bR_k(t)) \to 0$ for all $j,k$, and the DDS algorithm exactly recovers $(\bR^\star)$.
\end{theorem}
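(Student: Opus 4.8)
The plan is to track a potential function — the radius of the smallest ball in $\SO(D)$ containing all the normalization products $\bR_j^{\star\top}\bR_j(t)$ — and show it is monotonically non-increasing, strictly decreasing whenever it is positive, and in the limit must be zero. First I would fix notation: at iteration $t$ let $\cR(t) = \{\bR_j^{\star\top}\bR_j(t) : j \in [n]\}$, let $\overline{B(\bC(t), r(t))}$ be a smallest enclosing ball of $\cR(t)$ (well-defined and with $r(t) < \pi/2$ by Assumption~\ref{assump:local} and Theorem~\ref{thm:intgeo}), and observe that the update~\eqref{eq:depthseq} only moves coordinate $j = t \bmod n$, so $\cR(t+1)$ differs from $\cR(t)$ only in its $j$-th element. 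The first key step is to show the update of coordinate $j$ moves $\bR_j^{\star\top}\bR_j(t)$ strictly toward the interior of $B(\bC(t), r(t))$, unless it is already at the center. To do this I translate everything into the tangent space at $\bR_j(t)$: the good neighbors $k \in E_g^j$ contribute $\Log_{\bR_j(t)}(\bR_{jk}^\star \bR_k(t)) = \Log_{\bR_j(t)}(\bR_j^\star \bR_k^{\star\top}\bR_k(t))$, and the key geometric fact is that these good points, together with $\bzero$, all lie in a common closed halfspace of $T_{\bR_j(t)}\SO(D)$ whenever $\bR_j^{\star\top}\bR_j(t)$ is a boundary point of (a suitable small ball enclosing) $\cR(t)$ — this is exactly Corollary~\ref{cor:sodhalfsp}, pulled back through the map $\bS \mapsto \bR_j^{\star\top}\bS$ which is an isometry.

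The second key step is the robustness argument via Tukey depth. The fraction of bad neighbors of $j$ is at most $\alpha_0 < 1/(D(D-1)+2) = 1/(2\dim\SO(D)+2)$. Setting $\cY = \{\Log_{\bR_j(t)}(\bR_{jk}\bR_k(t)) : k \in E_g^j\}$ (the good tangent data) and $\cX$ the full set of tangent data, the inlier fraction satisfies $\#(\cY) > n_j - n_j/(2D'+2)$ where $D' = \dim\SO(D) = D(D-1)/2$; by Corollary~\ref{cor:sodhalfsp} the set $\cY$ lies in a closed halfspace $\overline{H(\bzero,\bv)}$ whose bounding hyperplane is the pullback of the one separating $B(\bC(t),r(t))$ at the boundary point $\bR_j^{\star\top}\bR_j(t)$. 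Here I need to be slightly careful about hypothesis (iii) of Lemma~\ref{lemma:depthcont}: if $\bR_j^{\star\top}\bR_j(t)$ is an interior point of the enclosing ball I can instead argue directly that a depth point is non-central; if it is a boundary point, the only good point that can lie on $L(\bzero,\bv)$ is $\bzero$ itself, and I may need a mild genericity/tie-breaking clause. Applying Lemma~\ref{lemma:depthcont} (with $D$ there replaced by $D'$, which is legitimate by the affine-equivariance remark extending Proposition~\ref{prop:mindepth} to $T_{\bR}\SO(D)$), the selected descent direction $v_j(t) \in \mathrm{relint}(\cD_\beta(\cX))$ lies in the open halfspace $H(\bzero,\bv)$, i.e.\ it points strictly into the ball. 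By Theorem~4.2 of~\cite{afsari2013convergence} (our step-size restriction), taking a step $\eta(t) v_j(t)$ with $\eta(t) \le \eta^\star(D)$ keeps us inside the ball and in fact strictly decreases the distance to $\bC(t)$; combined with Lemma~\ref{lemma:shrinkball} — whose hypothesis is precisely that half the ball (the closed halfspace $\overline{\cH(\bC(t),-v)}$ opposite the motion) contains no boundary point of $\cR$ after the move — this yields that $\cR(t+1)$ sits in a ball of radius strictly less than $r(t)$, provided some $j$ is a boundary point that can be moved inward.

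The third step is where Assumption~\ref{assump:wellconnect} enters, and I expect the bookkeeping here to be the main obstacle. Monotonicity $r(t+1) \le r(t)$ is easy, but strict decrease over a full cycle of $n$ iterations requires that, as long as $r(t) > 0$, at least one index $j$ on the boundary of the enclosing ball gets a genuinely inward-pointing update — and for that, for that boundary index $j$, the good neighbors must outnumber the bad neighbors strongly enough that the depth/halfspace argument of step two actually forces $v_j(t) \ne \bzero$ and non-central. The subtlety is that the enclosing ball may have its boundary points spread out, and the adversary may target precisely those nodes; this is exactly why well-connectedness is needed — applied to the set $J$ of indices whose normalization product sits near the "far side" of the ball boundary (a set one shows has size $\le n/2$ by a separating-halfspace argument), it produces a boundary node $j$ connected to strictly more "interior-side" nodes than "far-side" nodes, which combined with the outlier bound $\alpha_0 < 1/(D(D-1)+2)$ forces strictly more than half of $j$'s neighbors to have good tangent data in the open inward halfspace. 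Then I argue: $r(t)$ is non-increasing and bounded below, hence converges to some $r_\infty \ge 0$; if $r_\infty > 0$ a compactness/continuity argument on the (bounded, closed) configuration space together with the strict-decrease-over-a-cycle statement yields a contradiction (the standard Lyapunov/LaSalle-type argument — any limit point of the iterates would be a fixed configuration with $r > 0$, impossible since from it we could strictly decrease $r$). Therefore $r(t) \to 0$, which says all normalization products converge to a common $\bS \in \SO(D)$, i.e.\ condition~\eqref{eq:reccond} holds in the limit and $d(\bR_j^{\star\top}\bR_j(t), \bR_k^{\star\top}\bR_k(t)) \to 0$ for all $j,k$, giving exact recovery.

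The hard part will be making the "strict decrease over a cycle" rigorous: pinning down the precise set $J$ to feed into Assumption~\ref{assump:wellconnect}, verifying $\#(J) \le n/2$, checking that the well-connected node it returns is genuinely a boundary node of the current enclosing ball (or handling the case where the active constraints are not exactly the ball boundary), and handling edge cases where $\bzero$ lies on the separating hyperplane in Lemma~\ref{lemma:depthcont}'s hypothesis (iii). The depth-theoretic and Riemannian-convexity ingredients (Theorem~\ref{thm:intgeo}, Corollary~\ref{cor:sodhalfsp}, Lemmas~\ref{lemma:shrinkball} and~\ref{lemma:depthcont}, Proposition~\ref{prop:mindepth}) are all in hand; the work is in the combinatorial/graph-theoretic glue and the final limiting argument.
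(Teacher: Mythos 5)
Your proposal is correct and follows essentially the same route as the paper: the same potential function (radius of the smallest enclosing ball of the normalization products), the same three-stage structure (monotonicity via the convex-hull/halfspace argument and the Afsari step-size bound, strict decrease over a cycle by feeding the far-side boundary index set into Assumption~\ref{assump:wellconnect} and invoking Lemmas~\ref{lemma:depthcont} and~\ref{lemma:shrinkball}, then a limiting argument). The one step you leave as ``a compactness/continuity argument'' is resolved in the paper by noting that the depth-level-set mapping is outer semicontinuous (Mizera) so that a Zangwill/Meyer-type global convergence theorem for closed monotonic point-to-set maps applies, followed by a characterization of the fixed points via Lemma~\ref{lemma:depthcont} and well-connectedness.
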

}

\begin{proof}[Proof of Theorem~\ref{thm:sodrecovery}]

    To aid in the proof, we denote the smallest ball enclosing our normalization products as
\begin{equation}
B(t) := \argmin_{B(\bC, \rho)} \rho, \text{ s.t. } \bR_1^{\star \top} \bR_1(t), \dots, \bR_n^{\star \top} \bR_n(t) \in \overline{B(\bC, \rho)}.
\end{equation}
The center of $B(t)$ is $\bC(t)$ and its radius is $r(B(t))$. Our goal will be to show that $r(B(t)) \to 0,\ t \to \infty$. 

    
    The proof of the theorem is broken into three parts. In the first part, we prove that the sequence $[(\bR(t))]_{t \in \nats}$ remains in a nested sequence of balls. In the second part, we show that, after sufficiently many iterations, the radius of the smallest enclosing ball must shrink. We finish in the third part by appealing to a general convergence theorem for monotonic algorithms.
    
    \textbf{Part I: $B(t+1) \subseteq B(t)$:}
    First, we show that at time $t$, no matter which index is updated, the normalization products remain in $\overline{B(t)}$. This is true at $t=0$, so assume that it is true at a time $t$.  Let $j = t \mod n$ and consider the pairwise measurements in the tangent space at $\bR_j(t)$: for each $k \in E_g^j$, the corresponding point in the tangent space is given by $\Log_{\bR_j(t)}(\bR_j^{\star} \bR_k^{\star \top} \bR_k(t))$. By assumption, we have that $\bR_j^{\star} \bR_k^{\star \top} \bR_k(t) \in \bR_j^\star \overline{B(t)}$ for all $k$. Since $\alpha_0 < \zeta/(D(D-1) + 2)$ and $\beta =\zeta/(D(D-1) + 2)$, 
    $$\cD_{\zeta/(2D+2)}(\{\Log_{\bR_j(t)}(\bR_{jk} \bR_k(t)) : k \in E^j\}) \subset \mathsf{conv}(\{\Log_{\bR_j(t)}(\bR_{jk} \bR_k(t)) : k \in E_g^j\}),$$
    since the set in the right-hand side of the display contains more than a $1-\beta$ fraction of points.
     We can now apply Theorem 3.7 of~\cite{afsari2013convergence}. This follows from the fact that the update direction  $\bv_j(t)$ is the gradient of the Frech\'et mean function for a weighted combination of $\{\Log_{\bR_j(t)}(\bR_{jk} \bR_k(t)) : k \in E_g^j\}$ (since it lies in the convex hull of of these points). More formally, letting $m = \#(E_g^j)$, since  $\bv_j(t) \in \mathsf{conv}(\{\Log_{\bR_j(t)}(\bR_{jk} \bR_k(t)) : k \in E_g^j\})$, there exist weights $a_1, \dots, a_m$ such that
    \begin{align*}
    \bv_j(t) &= \sum_{i=1}^m a_i \Log_{\bR_j(t)} \bR_{j{k_i}} \bR_{k_i}(t) \\
    &= \sum_{i=1}^m a_i \Log_{\bR_j(t)} \bR_j^\star \bR_{k_i}^{\star \top} \bR_{k_i}(t) \\
    &= -\mathsf{grad} \sum_{i=1}^m a_i d^2(\bR_{k_i}^{\star \top} \bR_{k_i}(t),\cdot) \Big|_{R_{j}(t)}.
    \end{align*}
    Therefore, choosing the step size as in \cite{afsari2013convergence} implies that $\bR_j^{\star \top} \bR_{j}(t+1) \in \overline{B(t)}$, and further that $\bR_j^{\star \top} \bR_{j}(t+1) \in {B(t)}$ when  $\bv_j(t) \neq 0$. In turn, this implies that $\overline{B(t+1)} \subseteq \overline{B(t)}$ and, if $\bR_{j}(t) \in B(t)$, then $\bR_{j}(t+1) \in B(t)$ as well (i.e., interior points cannot move to the boundary).
    
    In the case of $\SO(3)$, Theorem 3.7 of~\cite{afsari2013convergence} tells us that choosing $\eta \in (0,1]$ suffices. The case of $\SO(D)$ for $D>3$ is dealt with in a similar way using Theorem 4.2 of~\cite{afsari2013convergence}.
    
	
	\textbf{Part II: $r(B(s+\Delta_s)) < r(B(s))$:}
	We now must show that after sufficiently many iterations, the radius of $B(t)$ strictly decreases. To this end, fix a time $s$. At this time, at least one normalization product $\bR_j^{\star \top} \bR_j(s)$ must lie on the boundary $\partial B(s)$. 
	For convenience, define the index set $J(s)$ of boundary rotations at time $s$ by
	$$J(s):= \bset{j:\bR_j^{\star\top} \bR_j(s) \in \partial B(s)}.$$
	   We will show that there exists a $\Delta_s > 0$ such that at some future time $s + \Delta_s$,
    \begin{equation}\label{eq:radmonotonic}
    	r(B(s+\Delta_s)) < r(B(s)).
    \end{equation}
    To this end, pick a direction $w$ uniformly at random from $T_{\bC(s)} \SO(D)$ such that $\|w\|_{\bC(s)} = 1$. This vector separates $T_{\bC(s)}$ into two halfspaces, and thus partitions $B(s)$ into two halves. One of these halves contains at most $n/2$ points $\Log_{\bC(s)} (\bR_j^{\star \top} \bR_j(s))$, and we will denote the corresponding halfspace of $T_{\bC(s)}\SO(D)$ by $\cH$. Since the direction $w$ is chosen uniformly at random, there are no points on the boundary of this halfspace with probability 1.

    Let $K(s)$ denote the set 
    $$K(s):= \{k : \bR_k^{\star \top} \bR_k(s) \in \Exp_{\bC(s)} (\cH) \cap \partial B(s) \},$$
    that is, the set of indices corresponding to boundary normalization products at time $s$. 
    At each time $t = s + m$ for $m>0$, if none of the normalization products $\bR_k^{\star \top} \bR_k(t)$, $k \in K(s)$, lie in $\partial B(s)$, then set $\Delta_s = m$ and we can apply Lemma~\ref{lemma:shrinkball} to yield~\eqref{eq:radmonotonic}.
    
    Otherwise, by Assumption~\ref{assump:weakwellconnect}, there is at least one index $k \in K(s)$ such that $\bR_k^{\star\top} \bR_k(s)$ is in $\partial B(s)$  and
    $$ \#\big(E^k \setminus K(s) \big) > \#\big(E^k \cap  K(s) \big).$$
    Suppose that we update this index $k$ at time $t = s+m$.
    We are in a situation where we can apply Lemma \ref{lemma:depthcont}, with 
    $$\cX = \{\Log_{\bR_j(t)} \bR_{jk} \bR_k(t) : k \in E^j\}, \quad \cY = \{\Log_{\bR_j(t)} \bR_{jk} \bR_k(t) : k \in E^j_g\},$$
    which yields that  $\bv_j(s+m) \in \mathrm{conv}(\cY)$ and  $\bv_j(s+m) \neq 0$.
    Thus, for sufficiently small $\eta(s+m)$, $\bR_j(s+m+1) \in B(s)$. Repeating this sequentially for all elements of $K(s)$, there must exist a $\Delta_s$ such that 
    $$ \bR_j^{\star \top} \bR_j(s+\Delta_s) \in B(s), \ \forall j \in K(s).$$

    We are left in a situation where $\bR_1^{\star \top} \bR_{1}(s + \Delta_s), \dots, \bR_n^{\star \top} \bR_{n}(s + \Delta_s) \in \overline{B(s)}$, and there exists a $w$ such that $\cH(\bC(s), -w)$ contains no boundary points. By appealing to Lemma~\ref{lemma:shrinkball}, we know that $(\bR(s + \Delta_s)$ lies in a ball of radius smaller than $r(B(s))$, and therefore $r(B(s))$ has a strictly monotonic subsequence.
    
    
    \textbf{Part III: Strict monotonicity implies convergence:}
    By \cite{mizera2002depth}, as long as $\beta \leq 1/(D(D-1)/2+1)$, the point to set mapping $$\bR_j(t) \mapsto D_{\beta} (\{\log_{\bR_j(t)} (\bR_{jk}\bR_k(t)) : k \in E^j\})$$ is non-empty and outer semicontinuous with respect to the empirical measure on $$\{\Log_{\bR_j(t)} \bR_{jk}\bR_k(t): k \in E^j\}.$$ Therefore, the associated algorithm~\eqref{eq:depthseq} is upper semicontinuous in the sense of Theorem 3.1 of~\cite{Meyer76}, and we obtain convergence of $\bR(t)$ to a fixed point of~\eqref{eq:depthseq}.
    
    We finish by examining fixed points of the algorithm~\eqref{eq:depthseq}. Suppose that $\bR$ is a fixed point of this sequence, such that  $\bv_j$ as defined by~\eqref{eq:vj} is zero. 
    {
    The fixed point is characterized by
$d(\bR_j, \bR_j^{\star} \bR_k^{\star \top} \bR_k) = 0$ for at least $(1-\zeta/2)n_j$ measurements $k$. By $\zeta$-well-connectedness, for all subsets $J$ of size at most $n/2$, there is an index $j$ such that $\#\Big[ E^j \cap J\big) < (1-\zeta/2) n_j$, because otherwise Lemma~\ref{lemma:depthcont} would yield a nonzero update direction. Thus, there is an index $k \in E^j \setminus [J]$ such that $d(\bR_j, \bR_j^{\star} \bR_k^{\star \top} \bR_k) = 0$. This implies that $d(\bR_j, \bR_j^{\star} \bR_k^{\star \top} \bR_k) = 0$ for all $j,k$.}
    \qed
\end{proof}

\subsection{{ Discussion of Assumptions}}
\label{subsec:assumpdiscuss}

{
The $\zeta$-well-connectedness condition in Assumption~\ref{assump:weakwellconnect} bears some similarity to the notions of conductance and graph expansion. From the perspective of graph theoretical results, we note that a sufficient condition for Assumption~\ref{assump:weakwellconnect} with $\zeta = 1$ is for the conductance of the graph to be greater than or equal to 1/2. This follows from a simple pigeonhole argument. It is unclear if this condition holds for Erd\"os-R\'enyi graphs. Indeed, if one uses Cheeger's inequality, one would need the spectral gap to be greater than or equal to 1, but for Erd\"os-R\'enyi graphs one only expects this gap to concentrate around 1 in practice~\citep{hoffman2021spectral}.

A sufficient condition for~\eqref{eq:weakwellconnect2} is for the conductance to be bounded below by ${\zeta}/{2}$, which can be achieved with high probability for any fixed $\zeta<1$ by Erd\"os-R\'enyi graphs when $p \gtrsim \log(n)/n$ (see, for example,~\citet{hoffman2021spectral}), as well as expander graphs  (see, for example,~\citet{friedman2003relative}). 

Our result in Theorem~\ref{thm:sodrecovery} holds with high probability for the uniform corruption model discussed in Section~\ref{subsec:robnotions} with $q < \alpha_0 = \zeta / (D(D-1) + 2)$ and $p \gtrsim \log(n)/n$. This means that DDS achieves the information theoretically optimal rate with respect to $p$ in this model. Indeed, we can read~\eqref{eq:inforate} as $p = \Omega\Big(\frac{1}{(1-q)^2} \frac{\log(n)}{n}\Big)$. 
}

Assumption~\ref{assump:local} requires that we initialize the DDS algorithm so that the normalization products lie in sufficiently small ball. This can be achieved in practice for cameras whose orientations lie close enough together. That is, suppose that all of the rotations $\bR_1^{\star \top}, \dots, \bR_n^{\star\top}$ lie in $B(\bS, \rho)$ for all $j$, for some $\bS$ and $\rho < \pi/2$. Then, if the initial point for the DDS algorithm is chosen to be $(\bI, \dots, \bI)$, then it is not hard to see that 
\begin{equation}
    \bR_j^{\star \top} \bI = \bR_j^{\star \top} \in B(\bS, \rho), \quad \forall j \in [n].
\end{equation}
which directly shows that Assumption~\ref{assump:local} holds.
{Notice that $B(\bS, \pi/2)$ is a large ball that essentially makes up half of the manifold $\SO(3)$, since the distance from any point to its cut-locus is $\pi$. For example, if one considers reconstruction of an object from many images taken from points on a sphere that surrounds this object, then our requirement would essentially boil down to needing all of the images being taken from a single hemisphere.

We conjecture that one can weaken this initialization condition, to only require that neighboring normalization products are close to each other, but we leave weakening of this assumption to future work.
}

{
\section{Empirical Evaluation}
\label{sec:exp}

The algorithms we compare with are MRA and L1-MRA~\citep{hartley2013rotation}, IRLS after L1-MRA initialization~\citep{chatterjee2013efficient}, LTS~\citep{huang2019learning}, CEMP~\citep{lerman2019robust}, and MPLS~\citep{shi2020message}. Default parameters of all methods are used. For CEMP, the method computes the corruptions levels first to determine which edges are most corrupted. Then, using these corruption levels, it finds a minimum spanning tree, from which one can fix $\bR_1 = \bI$ and then propagate from this to find the other rotations along this tree. For LTS, we implement the truncation step with parameter $\gamma = 0.96$ and run for 40 iterations. The approximate DDS algorithm is run for 40 epochs (or passes over the data, which means we take $T=40n$) with a step size of $\eta = 0.7$ and number of depth vectors $m=20$.

We compute the distance between the estimated rotations $(\hat \bR)$ and $(\bR^\star)$ by first aligning them by solving
\begin{equation}
    \bS = \argmin_{\bS' \in \SO(3)} \sum_{i=1}^n \| \bR_i^\star - \hat \bR_i \bS\|^2.
\end{equation}
The error is then computed as
\begin{equation}
    \mathsf{err}(\hat \bR, \bR^\star) = \max_{i=1, \dots, n} d_{\angle}(\hat \bR_i \bS, \bR_i^\star).
\end{equation}
All algorithms take less than a minute to run on each individual dataset on a Macbook Air with a 1.6 GHz Dual-Core Intel Core i5 and 8 GB of RAM.

Figure~\ref{fig:exp_unifunif_errs} presents a first comparison of these algorithms on synthetic data. The model is the uniform corruption model, which is discussed in Section~\ref{subsec:robnotions}. The graph is Erd\"os-R\'enyi on $n=100$ nodes with varying parameter $p$, and each edge on this graph is corrupted with probability $q$. The underlying rotations, $(\bR_1^\star, \dots, \bR_n^\star)$, are distributed uniformly on $\SO(D)$. The bad measurements, $\bR_{jk}^b$, are also uniformly distributed on $\SO(D)$. For each set of parameters ($p=0.1, 0.2, \ldots, 0.5$ and $q=0.05, 0.1, \ldots, 0.3$), 10 datasets are generated and the color represents the mean of the $\log_{10}$-errors over these experiments. As we can see, the approximate DDS algorithm performs on par with the other most competitive methods (CEMP and MPLS). 
\begin{figure}[ht]
    \centering
    \includegraphics[width = .24\textwidth]{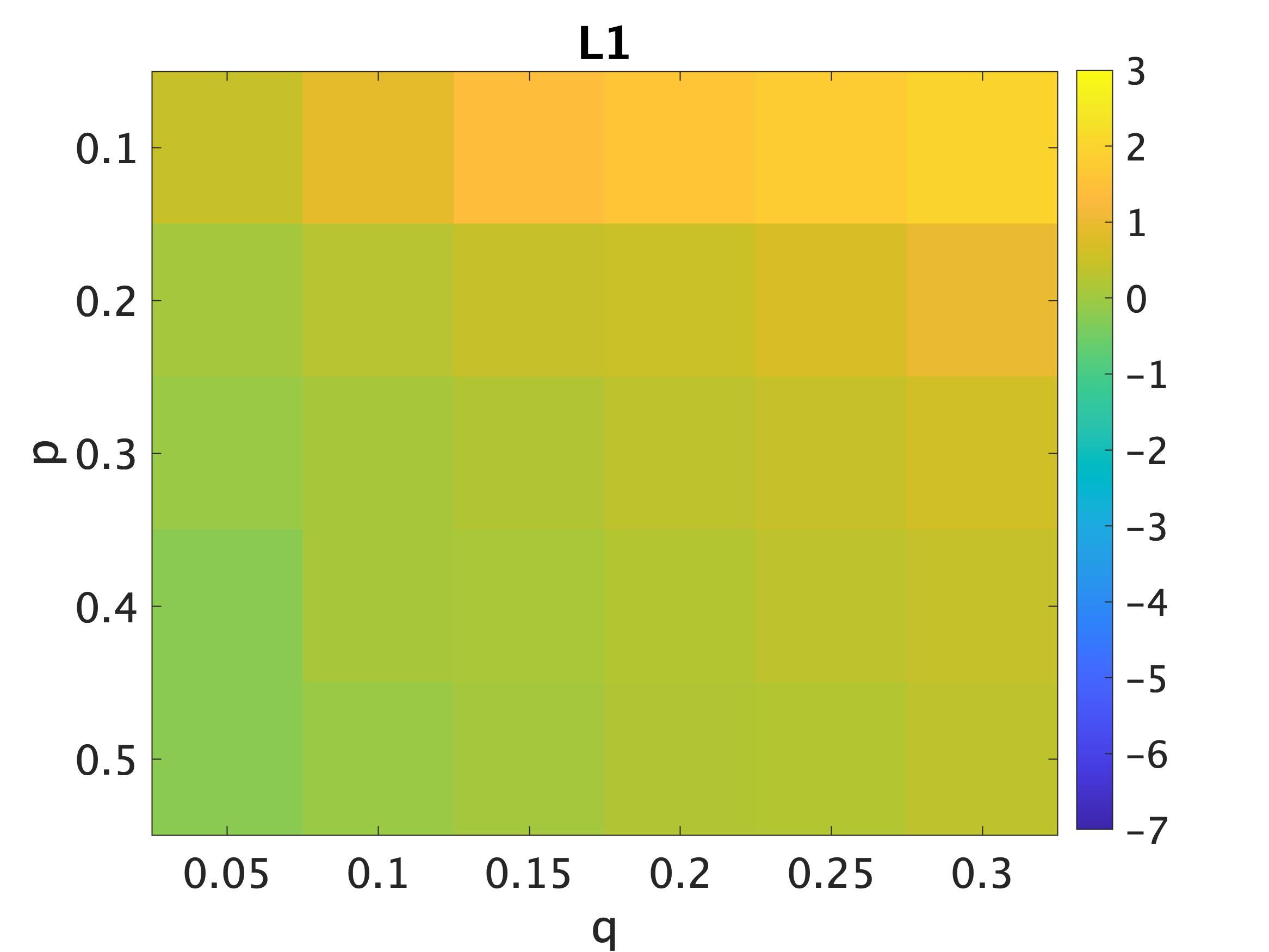}
    \includegraphics[width = .24\textwidth]{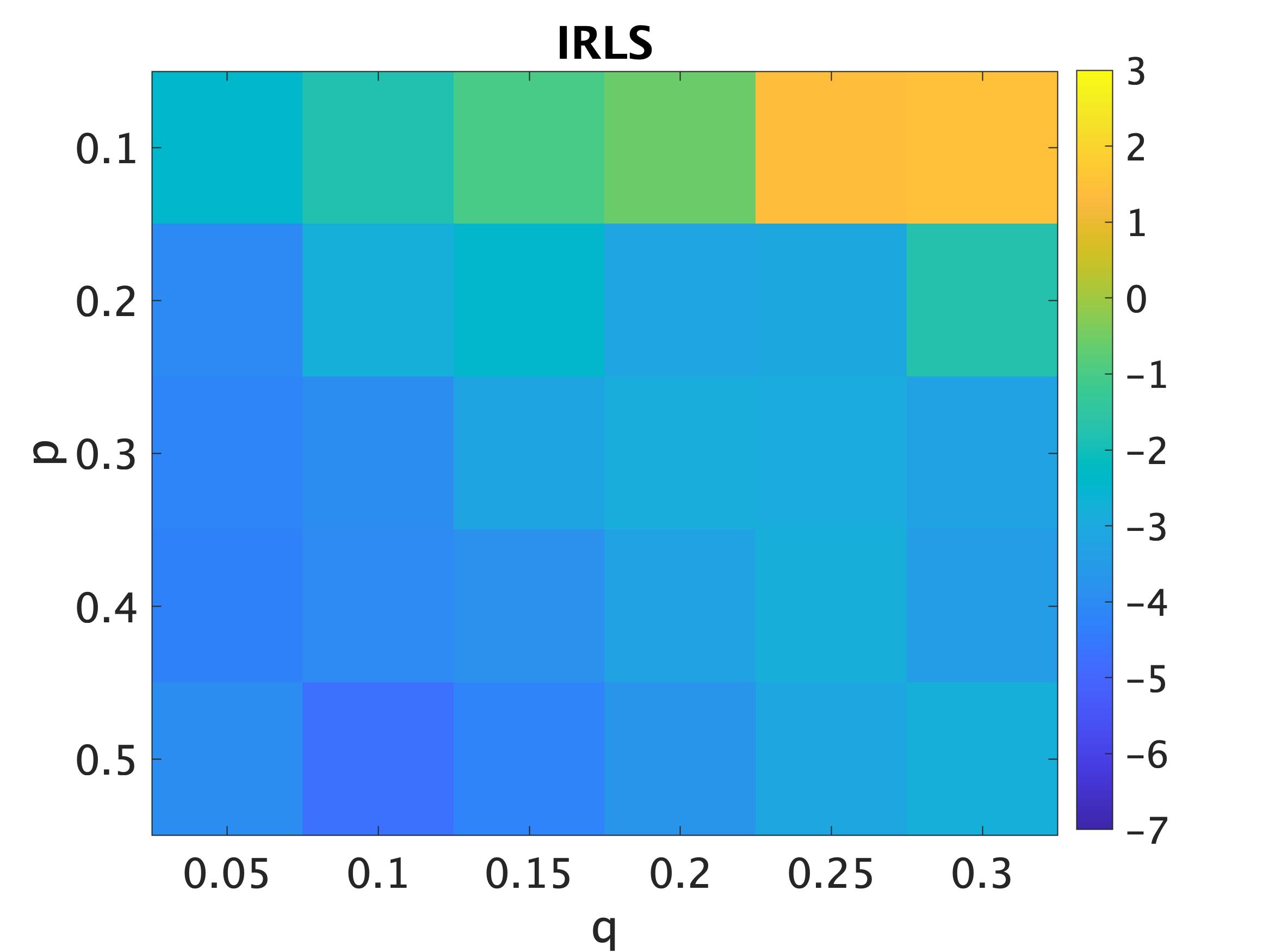}
    \includegraphics[width = .24\textwidth]{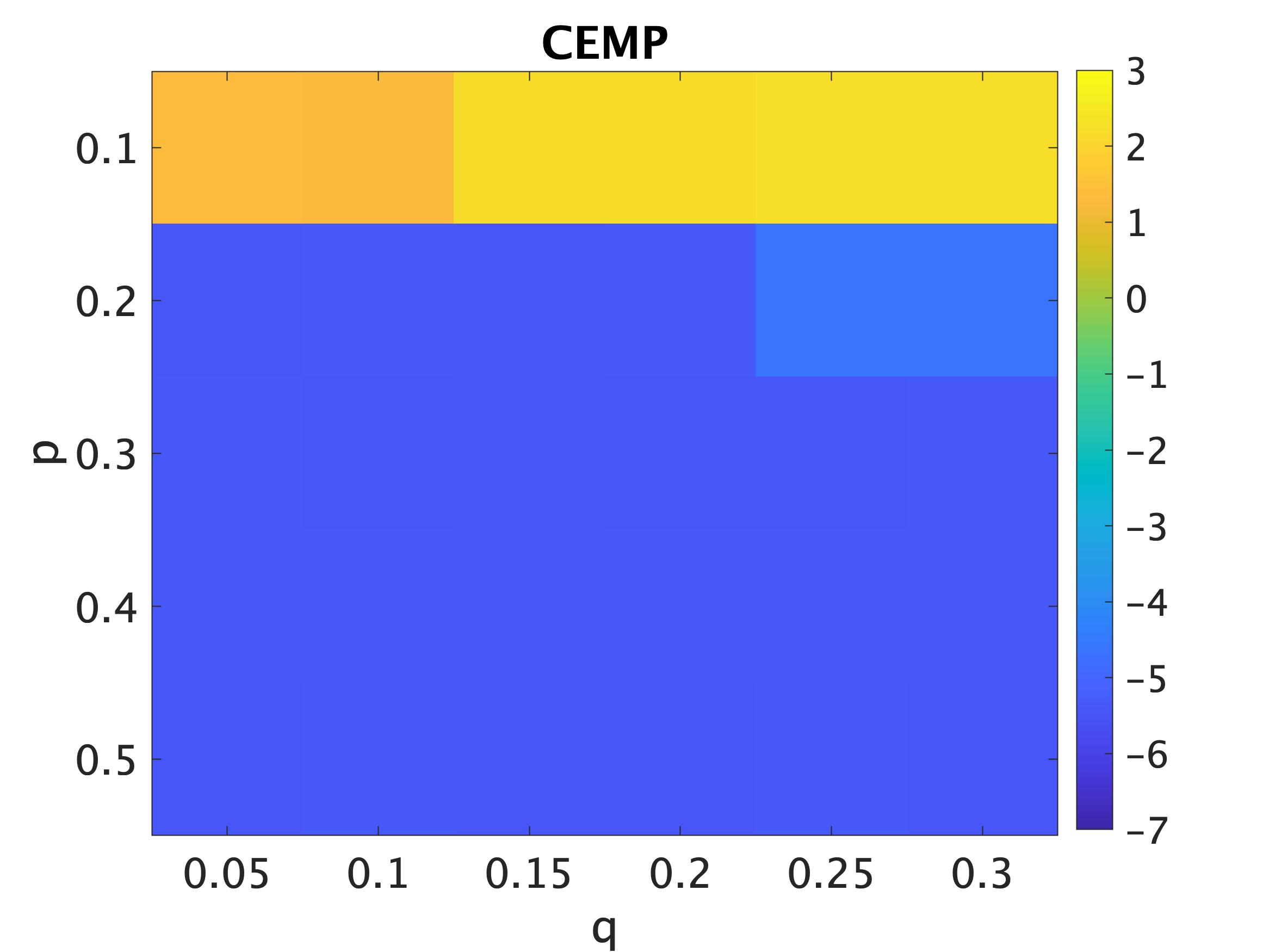}
    \includegraphics[width = .24\textwidth]{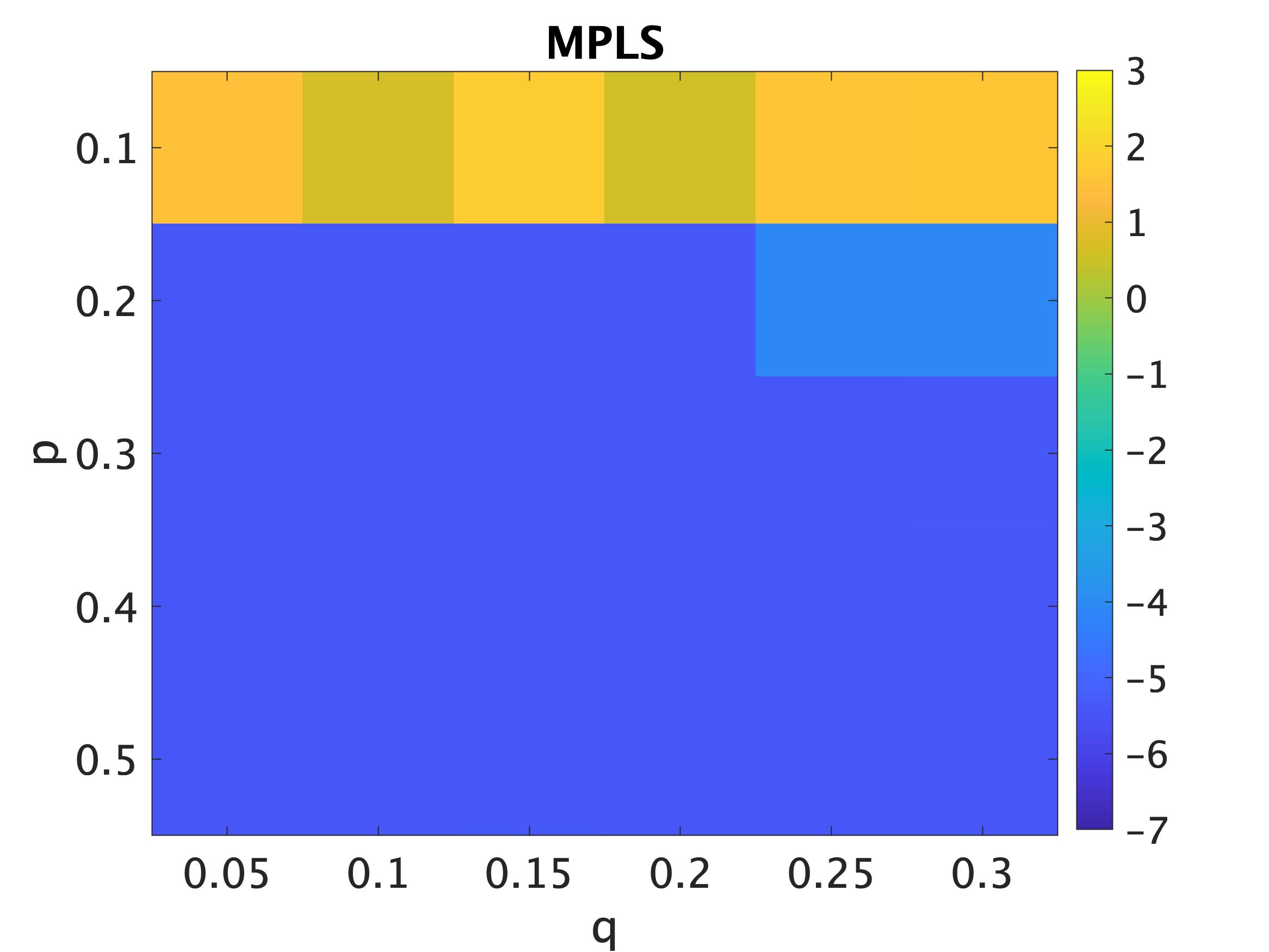}
    \includegraphics[width = .24\textwidth]{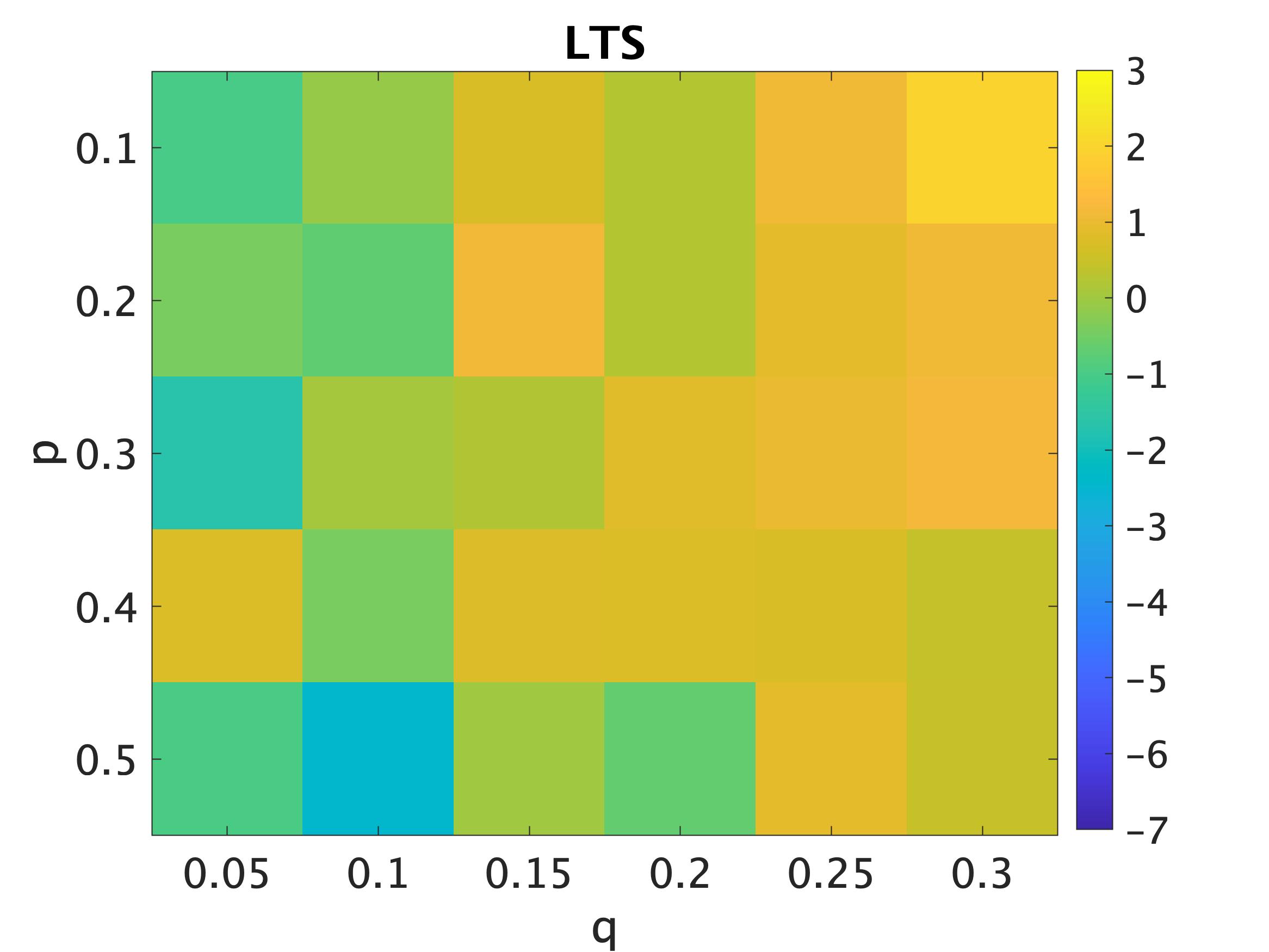}
    \includegraphics[width = .24\textwidth]{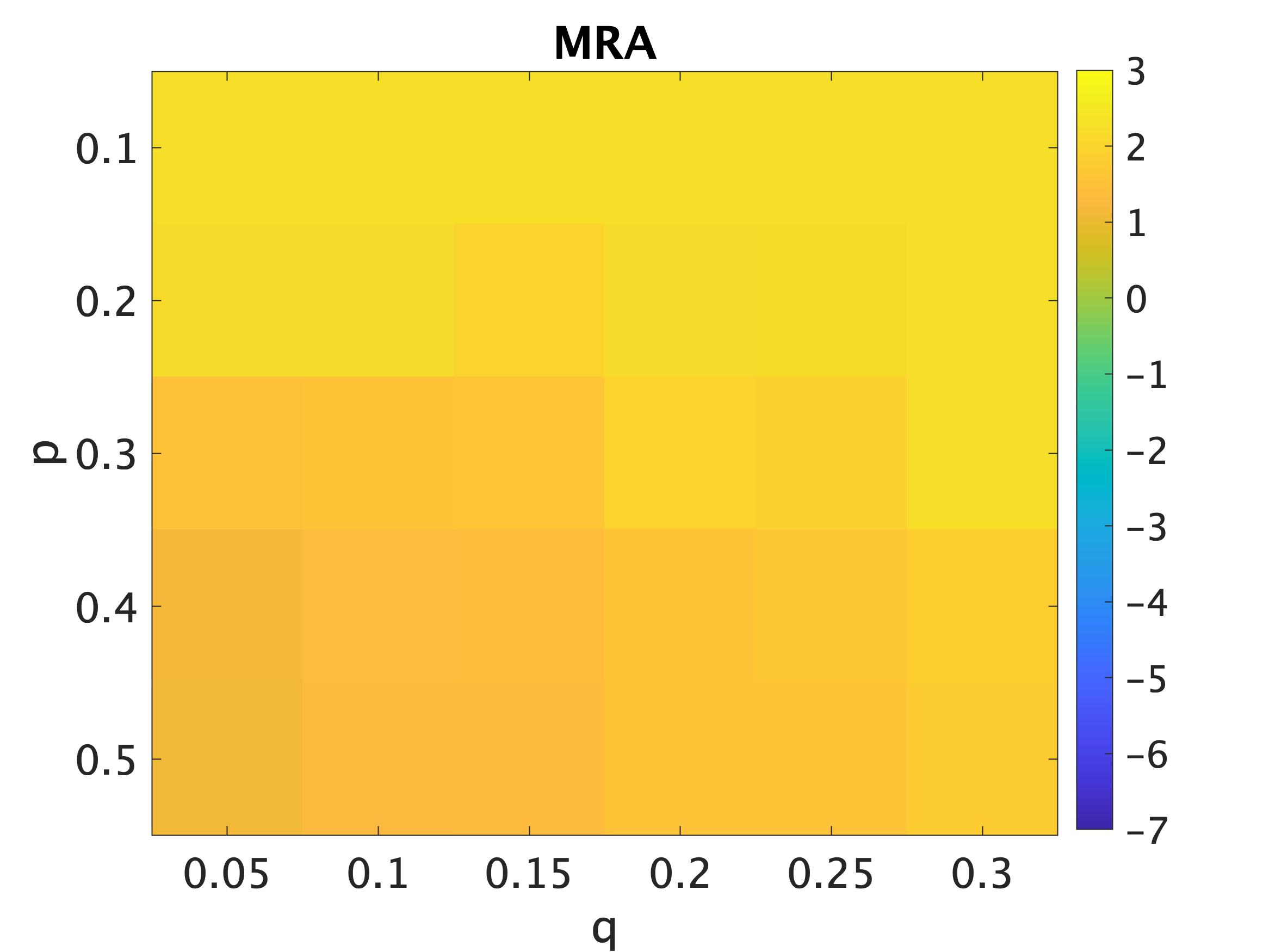}
    \includegraphics[width = .24\textwidth]{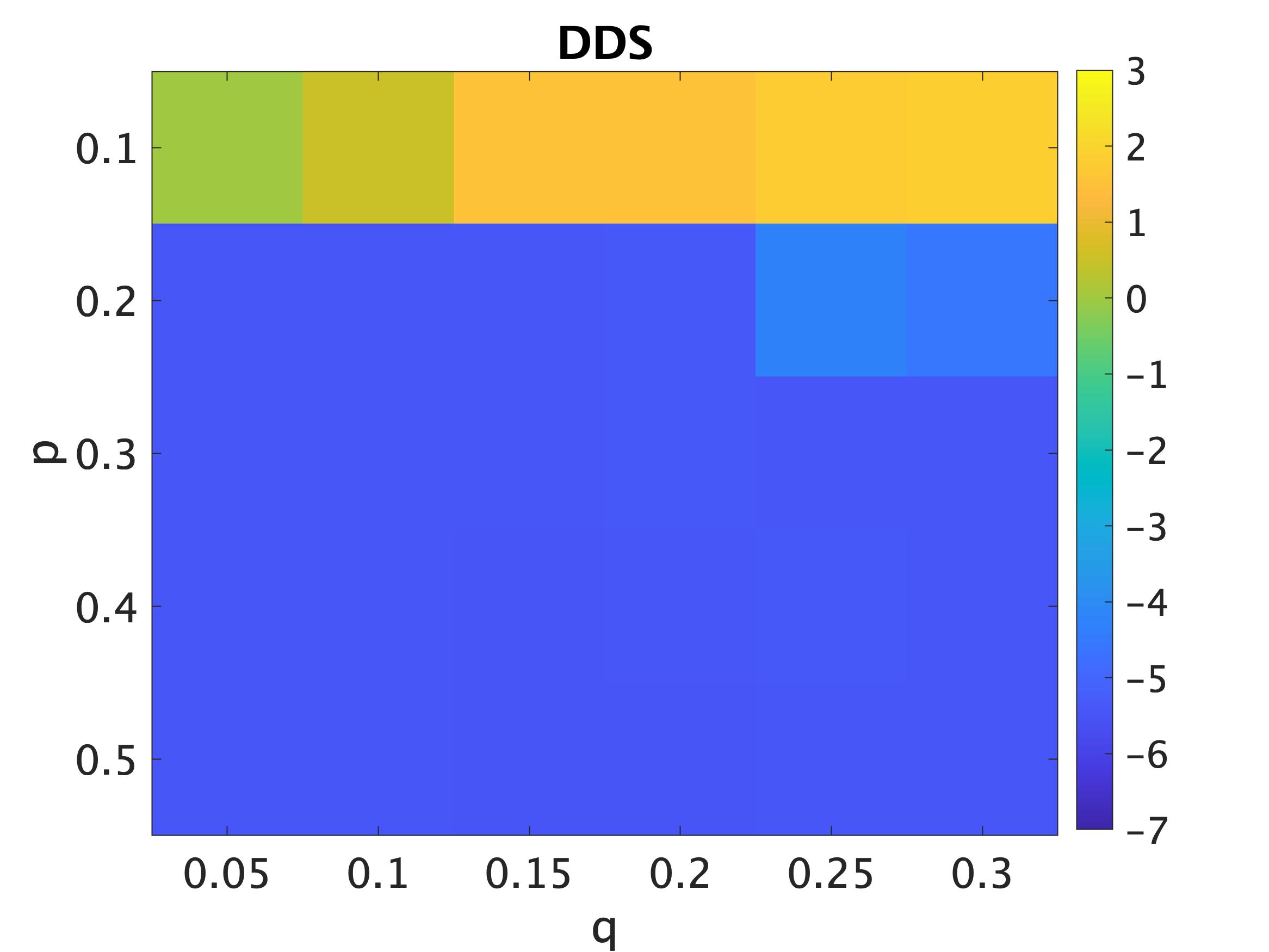}
    \caption{Rotation synchronization experiment with uniform outliers. Here, $p$ is the parameter of the Erd\"os-R\'enyi graph, and $q$ is the percentage of corrupted edges. The underlying rotations are distributed uniformly in $\SO(D)$, and the corrupted measurements of group ratios are also uniform on $\SO(D)$. The color represents the mean of the $\log_{10}$-errors over the 10 generated datasets.}
    \label{fig:exp_unifunif_errs}
\end{figure}


As a second experiment, Figure~\ref{fig:exp_lineline_errs} presents a more challenging adversarial example on synthetic data. Here, the outliers form a consistent set of measurements themselves, and a similar corruption model is discussed in Section 7.3 of~\cite{lerman2019robust}, although here we extend this to $\SO(3)$ and use a different model for the underlying rotations.

The graph is an Erd\"os-R\'enyi graph on $n=50$ nodes with parameter $p$, and each edge on this graph is corrupted with probability $q$. The ground truth rotations approximately come from a geodesic on $\SO(3)$, and the outliers are self-consistent measurements that come from (approximately) another geodesic on $\SO(3)$. The ground truth rotations are
\begin{equation}\label{eq:linein}
    \bR_{i}^\star = \Exp_{\bI}\Big( -s_i (\bv + \bxi_i)\Big),
\end{equation}
where $\bv$ is a fixed vector drawn uniformly from the sphere, $\bxi_i \sim N(\bzero,10^{-4} \bI)$, and $s_i = -1 + {2(i-1)}/{50}$. The outliers generated by pairwise measurements between another set of rotations
\begin{equation}\label{eq:lineout}
    \bR_{i}^b = \Exp_{\bI}\Big( -s_i (\bv' + \bxi_i')\Big),
\end{equation}
where again $\bv'$ is a fixed vector drawn uniformly from the sphere, $\bxi_i \sim N(\bzero,0.5 \bI)$, and $s_i = -1 + {2(i-1)}/{50}$. As before, for each set of parameters $p$ and $q$, 10 datasets are generated and the color represents the mean of the $\log_{10}$-errors over these experiments. As we can see again, the approximate DDS algorithm performs well in this experiment, and in fact it performs on par with the most competitive rotation synchronization algorithms (CEMP and MPLS).
\begin{figure}[ht]
    \centering
    \includegraphics[width = .24\textwidth]{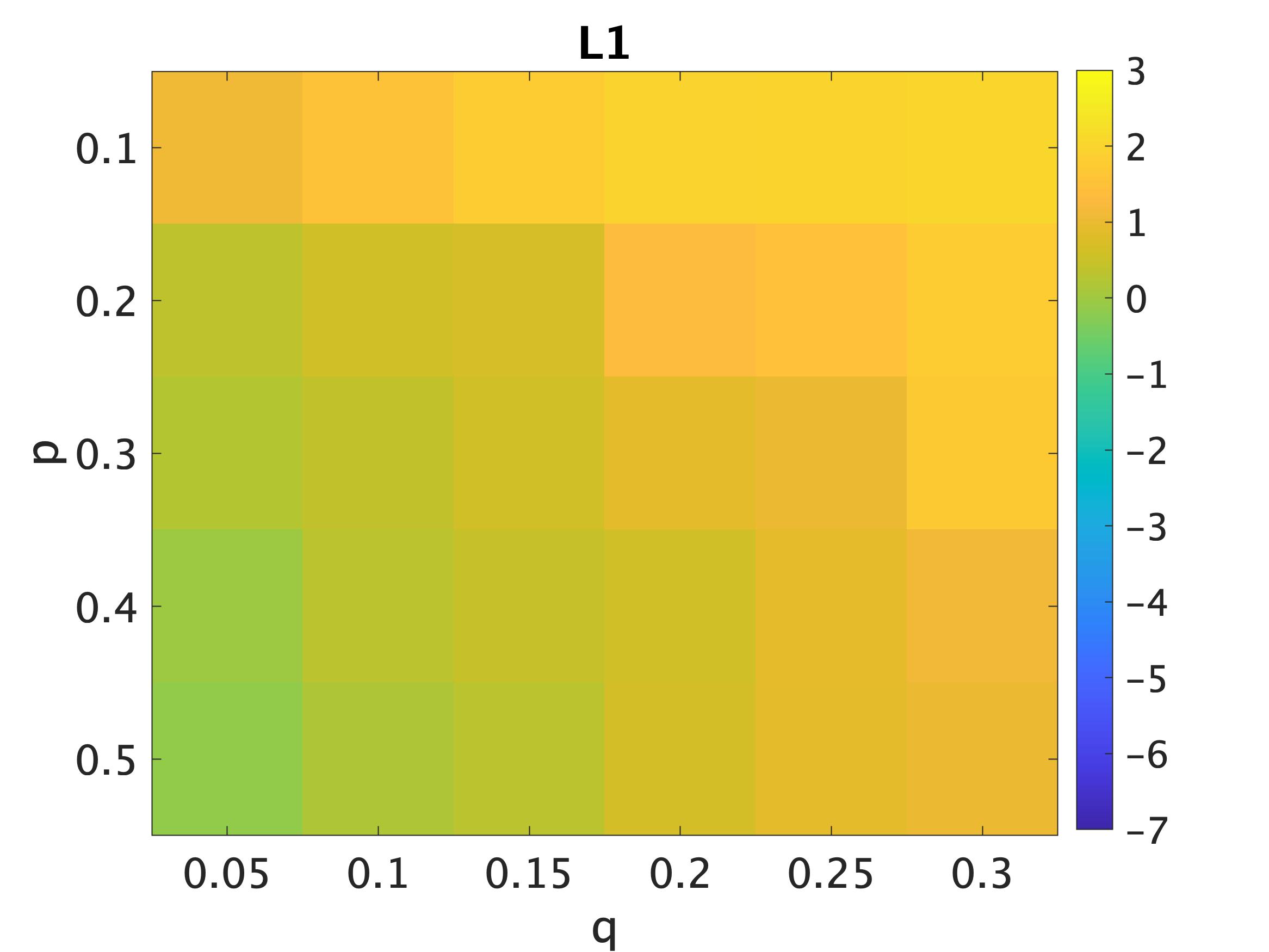}
    \includegraphics[width = .24\textwidth]{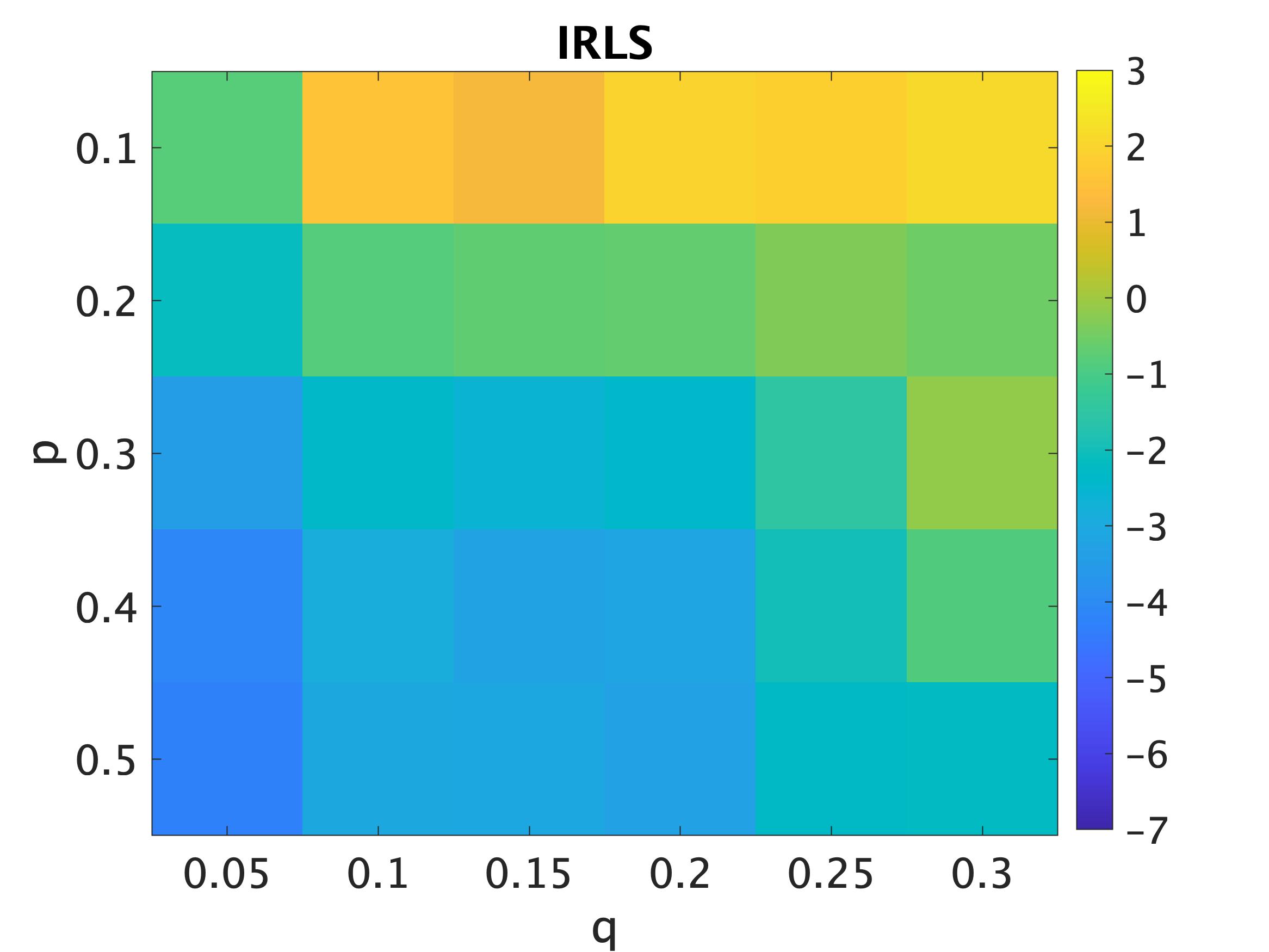}
    \includegraphics[width = .24\textwidth]{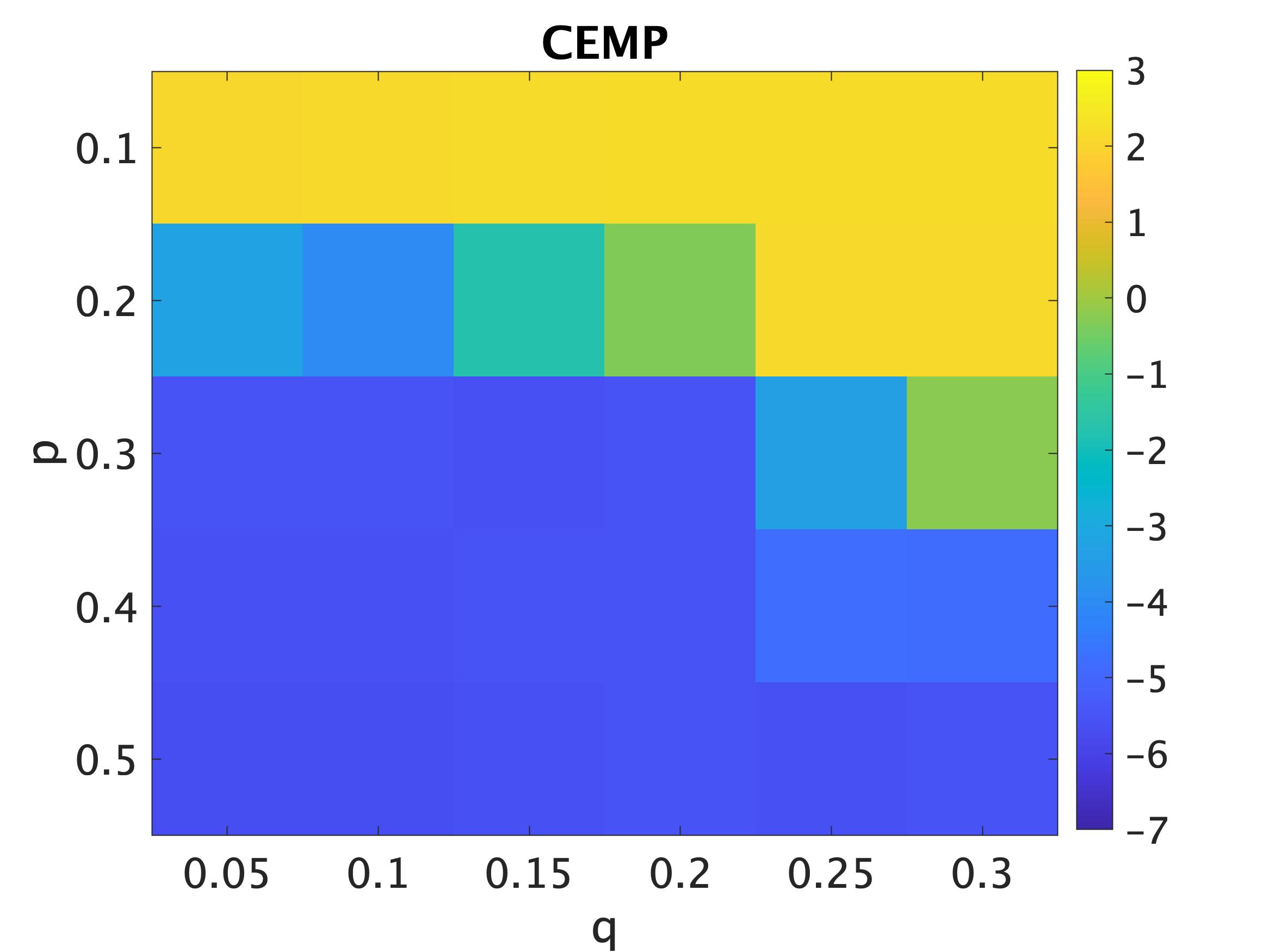}
    \includegraphics[width = .24\textwidth]{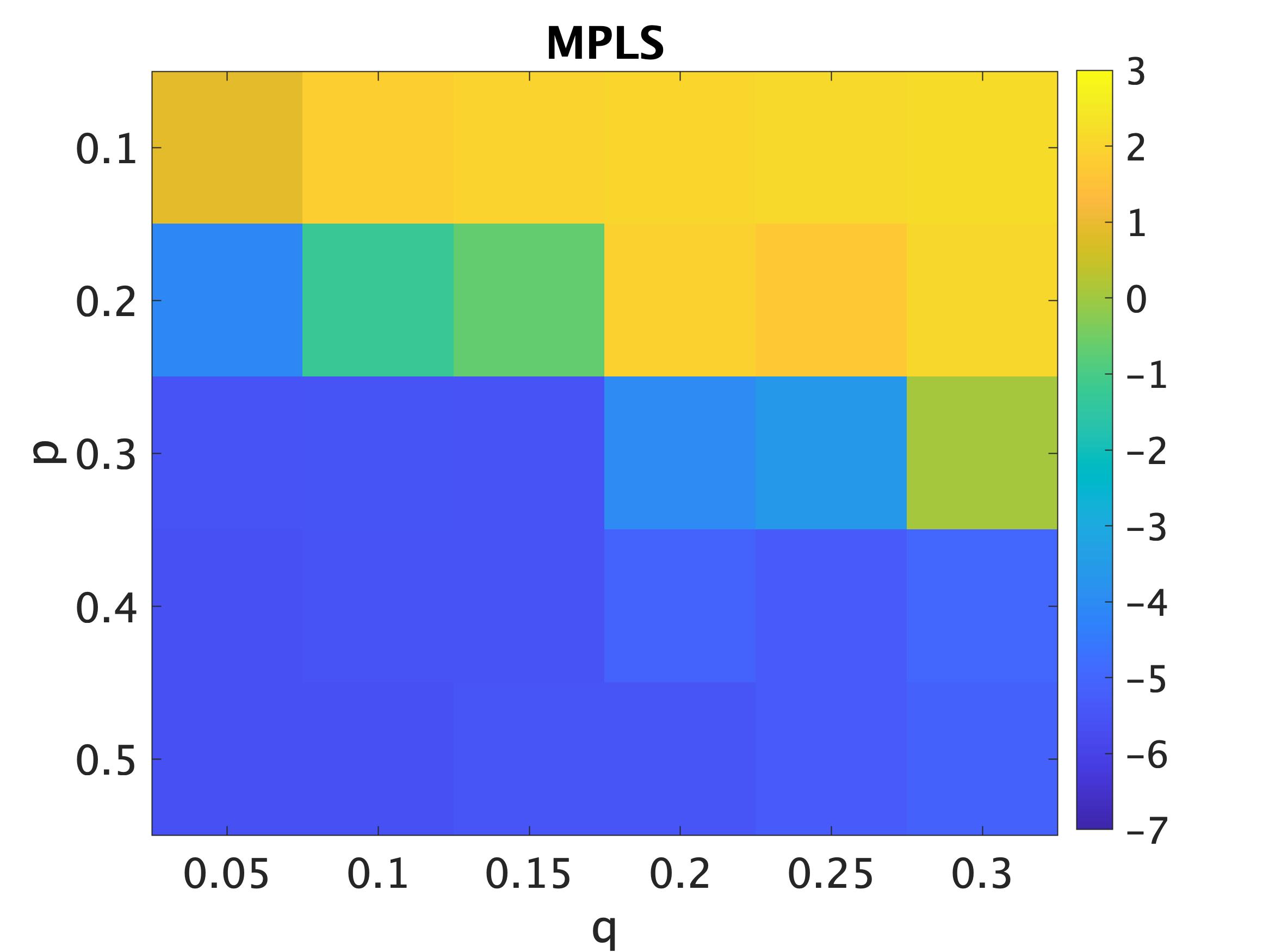}
    \includegraphics[width = .24\textwidth]{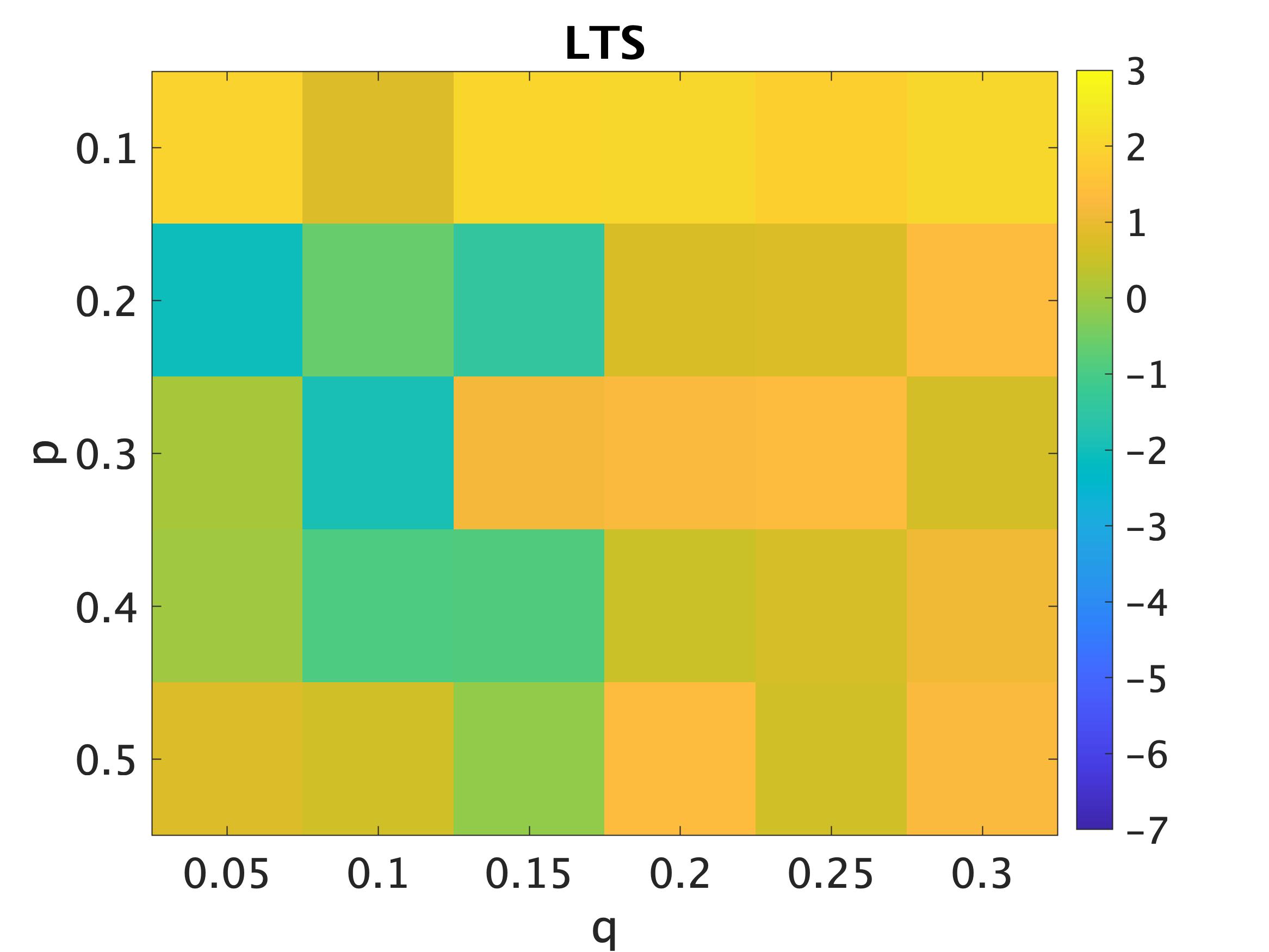}
    \includegraphics[width = .24\textwidth]{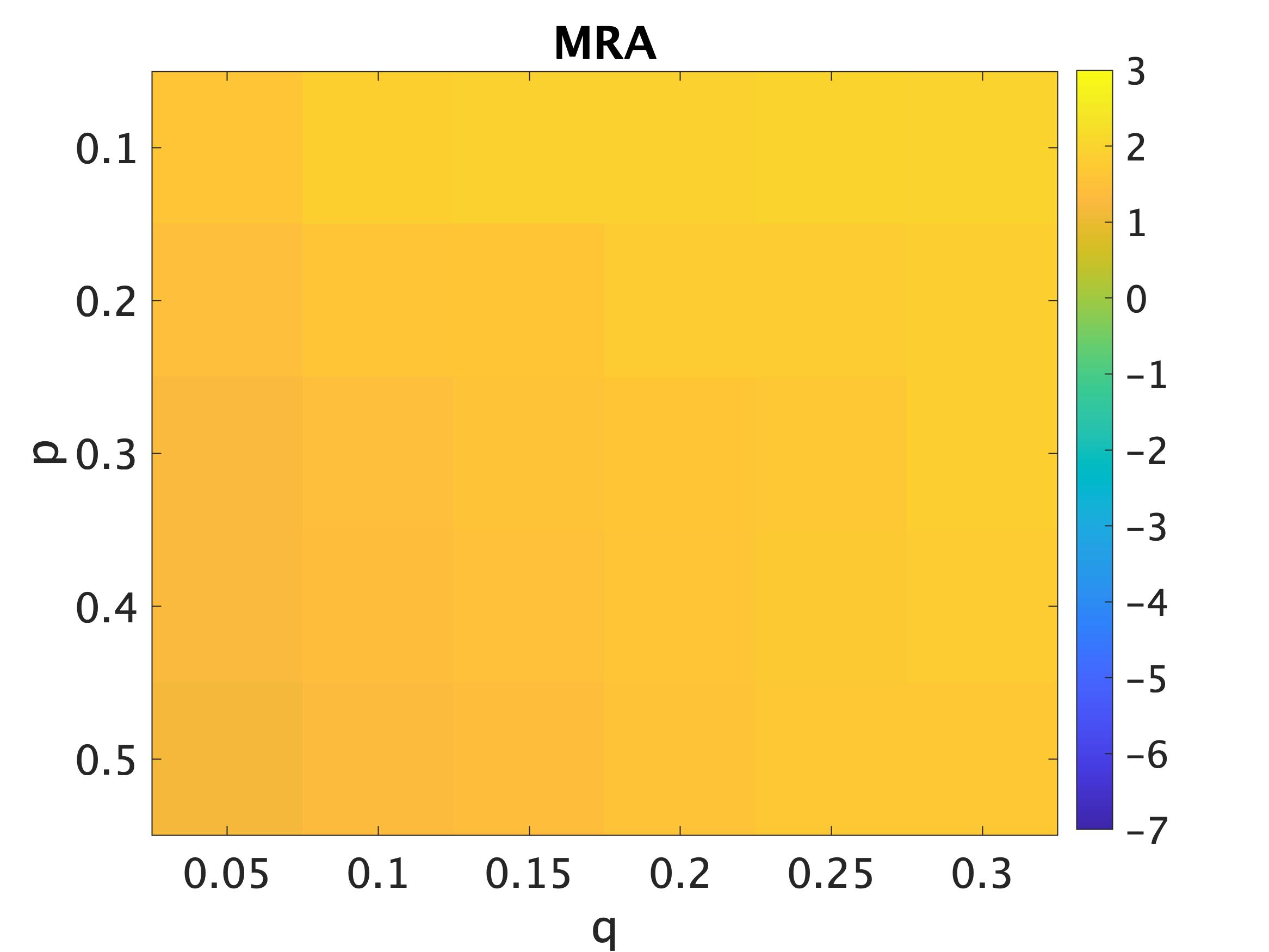}
    \includegraphics[width = .24\textwidth]{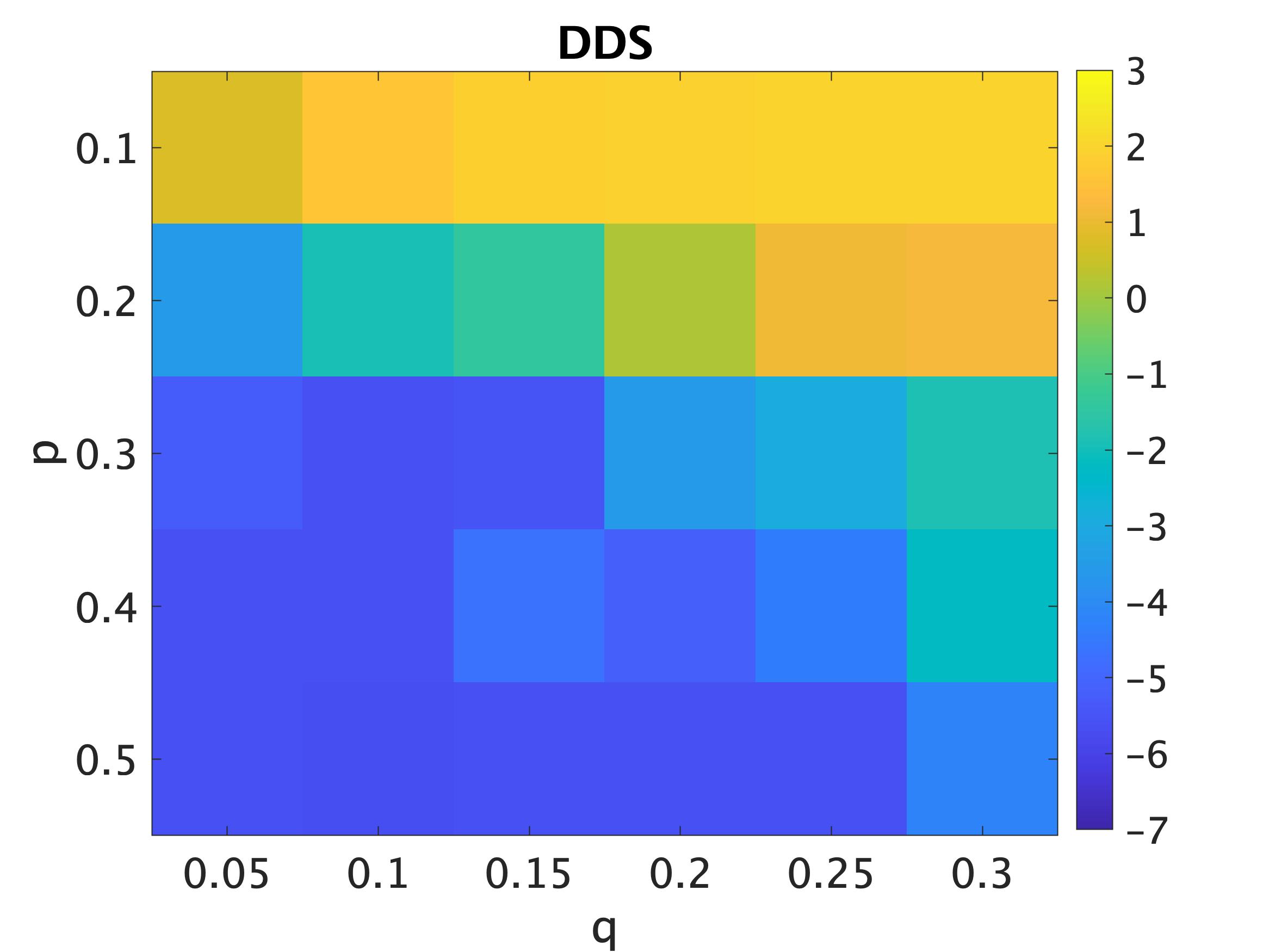}
    \caption{Rotation synchronization experiment with adversarial outliers. Here, $p$ is the parameter of the Erd\"os-R\'enyi graph, and $q$ is the percentage of corrupted edges which are uniformly distributed across this graph. The underlying rotations follow the model in~\eqref{eq:linein}, and the corrupted measurements are pairwise measurements between rotations generated by the separate set~\eqref{eq:lineout}. The color represents the mean of the $\log_{10}$-errors over the 10 generated datasets.}
    \label{fig:exp_lineline_errs}
\end{figure}

 In both experiments, we note that the other competitive algorithms are CEMP and MPLS~\citep{lerman2019robust,shi2020message}. As mentioned earlier, CEMP, and thus also MPLS that uses ideas of CEMP, have higher memory cost than DDS. 
 

}

\section{Conclusion}
\label{sec:conclude}

In this work, we developed the first adversarial robustness guarantees for a multiple rotation averaging algorithm. Our novel algorithm relies on finding descent directions using Tukey depth in the tangent space of $\SO(D)$. To our knowledge, this represents the first application of manifold Tukey depth in an applied setting. In the case of $D=2$ and $D=3$, which most frequently arise in practice, our recovery thresholds are $1/4$ and $1/8$, and the algorithm can be implemented efficiently. 
{
We also show how to speed up the algorithm with some approximations, and this approximate algorithm performs competitively on simple synthetic experiments. 
Future work should also examine if it is possible to extend the analysis to the more practical approximate DDS algorithm.}

Another direction for future work is to examine the limits of our analysis. In particular, it would be interesting to know if tighter analyses can yield larger recovery thresholds. At least for the cases of $\SO(2)$ and $\SO(3)$, which arise in applications, the depth descent estimator discussed in this paper has significant recovery thresholds, while also being computationally tractable. { It is not clear what the optimal bounds for recovery with adversarial corruption are in general. Furthermore, if one moves away from adversarial corruption and instead considers special models of data like the uniform corruption model, the bounds could be much better.  } 

Two more concrete directions for future work would be to carry out further examination of the $\zeta$-well-connectedness condition in Assumption~\ref{assump:weakwellconnect}. In particular, it would be interesting to see if it can be relaxed at all,  what its implications are, and when it actually holds. 


Finally, perhaps the most important direction for future work is to give theoretically justified algorithms for a larger range of algorithms employed for SfM~\citep{ozyecsil2017survey,bianco2018evaluating}. Indeed, such theoretical work can lead to new and improved algorithms and also to the development of novel state-of-the-art pipelines.

\bibliography{refs}

\bibliographystyle{plainnat}


\end{document}